\numberwithin{equation}{section}
\newtheorem{theorem}{Theorem}[section]
\newtheorem{prop}[theorem]{Proposition}
\newtheorem{lemma}[theorem]{Lemma}
\newtheorem{defn}[theorem]{Definition}
\theoremstyle{definition}
\newtheorem{question}[theorem]{Question}
\newcommand{\Z}{\mathbb{Z}}
\newcommand{\Q}{\mathbb{Q}}
\newcommand{\C}{\mathbb{C}}
\newcommand{\R}{\mathbb{R}}
\newcommand{\Qsquare}{\Q^\times / (\Q^\times)^2}
\newcommand{\wt}[1]{\widetilde{#1}}
\newcommand{\DM}[1]{{\color{blue} DM: #1}}
\title{Cusp cross-section phenomena for arithmetic hyperbolic manifolds}
\author{Duncan McCoy}
\author{Connor Sell}
\date{}
\begin{document}
\begin{abstract}
     Although every flat manifold occurs as a cusp cross-section in at least one commensurability class of arithmetic hyperbolic manifolds, it turns out that some flat manifolds have the property that they occur as cusp cross-sections in precisely one commensurability class of arithmetic hyperbolic manifolds -- a phenomena which we will refer to as the UCC property. We construct flat manifolds with the UCC property in all dimensions $ n \geq 32 $. We also show that the number of distinct commensurability classes containing cusp cross-sections with the UCC property is unbounded. We also exhibit pairs of manifolds in all dimensions $ n \geq 24 $ that cannot arise as cusp cross-sections in the same commensurability class of arithmetic hyperbolic manifolds.

     The main tool is previous work of the authors algebraically characterizing when a given flat manifold arises as the cusp cross-section of a manifold in a given commensurability class of arithmetic hyperbolic manifolds. 
\end{abstract}
\maketitle


\section{Introduction}
The aim of this paper is to explore two phenomena relating to cusp cross-sections of arithmetic hyperbolic manifolds. The study of cusp cross-sections dates back to the notion of geometric bounding.  In 1983, Farrell and Zdravkovska conjectured that every flat manifold bounds geometrically - that is, it occurs as the cusp cross-section of a one-cusped hyperbolic manifold \cite{FarrellZ}.  The conjecture was proven false in 2000 by Long and Reid \cite{Long-Reid2000geometric}; however, they later showed (along with McReynolds) that every flat manifold appears as a cusp cross-section of some arithmetic hyperbolic manifold, not necessarily one-cusped \cite{LongReid, McReynolds2009covers}. It is then natural to wonder under what conditions a flat manifold can be obtained as a cusp cross-section of a hyperbolic manifold.

The construction of Long-Reid and McReynolds produces a commensurability class of \emph{arithmetic} hyperbolic manifolds. The second author proved in \cite{Sell} that some flat 3-manifolds are obstructed from arising in some commensurability classes of hyperbolic 4-manifolds, raising the question of whether such obstructions occur in other dimensions, and if so, how restrictive they can be - for instance, are there flat manifolds that occur in only finitely many classes? It turns out that this question cannot possibly be addressed in full generality by the methods of \cite{LongReid, McReynolds2009covers}. In \cite{McCoySell2024}, the authors produced an algebraic condition which determines precisely whether any given flat manifold arises as a cusp cross-section in a given commensurability class of arithmetic hyperbolic manifolds. In sufficiently high dimension, all known examples of hyperbolic manifolds are either arithmetic or made from gluing arithmetic manifolds together as in \cite{Gromov}, so the arithmetic case is natural to study.

It was shown in \cite{McCoySell2024} that there are examples of flat manifolds that arise as a cusp cross-section in precisely one commensurability class of arithmetic hyperbolic manifolds. The first aim of this article is to explore the scope of this phenomena. We say that a flat $n$-manifold has the \emph{unique arithmetic commensurability class (UCC) property} if it occurs as a cusp cross-section in a unique commensurability class of non-compact arithmetic hyperbolic $(n+1)$-manifolds. Firstly we show that manifolds with the UCC property exist in all sufficiently large dimensions.

\newcommand{\thmUCCbound}{
For all $n\geq 32$ there exists an orientable flat $n$-manifold with the UCC property. 
}
\begin{theorem}\label{thm:UCCbound}
    \thmUCCbound
\end{theorem}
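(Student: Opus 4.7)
The plan is to leverage the algebraic characterization from \cite{McCoySell2024}, which says that for any commensurability class of non-compact arithmetic hyperbolic $(n+1)$-manifolds — indexed, up to similarity, by a rational quadratic form $q$ of signature $(n+1,1)$ — a flat $n$-manifold $M$ with linear holonomy $\Phi$ arises as a cusp cross-section iff $\Phi$ admits a suitable integral representation preserving a form $q'$ for which $q \sim q' \perp \langle 1,-1\rangle$. Thus the UCC property translates into the essential-uniqueness statement that there is a single similarity class of rational rank-$n$ forms $q'$ compatible with $\Phi$. The existence of UCC examples in \emph{some} dimension is already known from \cite{McCoySell2024}; the content of Theorem~\ref{thm:UCCbound} is to realize them in every dimension $n\geq 32$.

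I would proceed by a bootstrapping construction: fix a known UCC example $M_0$ of some base dimension $n_0$, and then for each $n \geq 32$ build a flat $n$-manifold $M_n$ whose holonomy decomposes as $\Phi_0 \oplus \Phi'$, where $\Phi_0$ is the holonomy of $M_0$ and $\Phi'$ is the holonomy of an auxiliary rigid piece. The natural candidates for the auxiliary piece are flat manifolds (for example, Hantzsche--Wendt-type diagonal $(\Z/2)^k$-manifolds) whose holonomy forces the invariant rational form on that summand to lie in one similarity class. The lower bound $n \geq 32$ should then arise as the dimensional cost of assembling enough rigid summands to handle all the relevant local Hasse--Witt invariants. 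Orientability can be arranged by passing to an index-two subgroup if necessary, using Bieberbach's theorems to guarantee that the constructed holonomy actually comes from an orientable flat manifold.

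The main obstacle is what I will call \emph{direct-sum slack}: once $\Phi$ splits as $\Phi_0 \oplus \Phi'$, any pair $(q_0', q'')$ of accommodating forms for the summands yields a candidate $q_0' \perp q''$, so naively the number of accommodating rank-$n$ forms multiplies. Controlling this slack so that only one similarity class of rank-$n$ forms survives is the real work; it should dictate both the very specific choice of auxiliary building blocks (so that $q''$ is forced in its similarity class and cannot be rescaled or twisted in ways that create new total forms) and the precise lower bound $n\geq 32$. Once this rigidity is established for each $n$, the theorem follows by checking that the resulting $M_n$ is orientable and that its unique accommodating rank-$n$ form assembles with the hyperbolic plane into one similarity class of signature-$(n+1,1)$ form.
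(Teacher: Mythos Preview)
Your proposal correctly identifies the framework and the central difficulty, but it has a genuine gap: the suggested rigid building blocks do not have the property you claim for them.

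Hantzsche--Wendt-type diagonal $(\Z/2)^k$-manifolds have holonomy acting by sign changes on the standard coordinates of $\R^n$, so \emph{every} positive-definite diagonal form $\langle a_1,\dots,a_n\rangle$ is a holonomy form. These range over many projective equivalence classes, so such manifolds are essentially the opposite of rigid in the sense you need. More generally, any flat manifold whose rational holonomy representation decomposes into one-dimensional pieces will have this defect. Thus your bootstrapping strategy, as stated, cannot close the direct-sum slack you correctly flagged; you have named the obstacle but not proposed a mechanism to overcome it.

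What the paper actually does to kill the slack is not to choose summands cleverly but to quotient by group actions that \emph{permute} identical factors, forcing the holonomy form into the shape $g^{\oplus k}$. For $n\equiv 0\bmod 4$ one builds a manifold $E$ as a quotient of $\wt B^{\times 4}$ by a free $Q_8$-action (with $\wt B$ a double cover of a Hantzsche--Wendt extension), which forces every holonomy form to be $h^{\oplus 4}$ and hence to have $d=1$ and $\varepsilon_p=1$ at every prime. For $n\equiv 2\bmod 4$ a $\Z_4$-action on $\wt B^{\times 2}$ forces $f\cong g\oplus g$. The case $n\equiv 1\bmod 4$ then does follow by a product, $E\times S^1$, but this works only because $E$'s invariants are \emph{completely} trivial, not merely projectively unique. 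The delicate case is $n\equiv 3\bmod 4$, handled by a free $\Z_6$-action on $\wt C^{\times 2}\times \wt E\times B_{HW}$; this construction only starts at dimension $35$, and \emph{that} is where the threshold $n\geq 32$ comes from --- not from accumulating enough summands to handle local invariants, as you speculated.

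A smaller issue: passing to an index-two subgroup to repair orientability would change the holonomy representation and could destroy the rigidity you just established. The paper instead arranges orientability directly in each construction by controlling the determinant of the permuting action.
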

A complete list of dimensions in which manifolds with the UCC property are known to exist can be found in  Table~\ref{tab:UCC_constructions}.
To date examples of flat manifolds with the UCC property in dimensions $n\equiv 3\bmod 4$ have proven harder to construct, with examples for dimensions $n\not\equiv 3\bmod{4}$ existing whenever $n\geq 10$. The simplest example of a manifold with the UCC is $S^1$, which has the UCC property for the somewhat vacuous reason that there is only one commensurabilty class of non-compact arithmetic hyperbolic surfaces.

It turns out that the number of commensurability classes of arithmetic hyperbolic manifolds containing cusp cross-sections with the UCC property tends to infinity with the dimension.
\newcommand{\thmManyUCCclasses}{Let $k\geq 1$ be an integer. Then for all sufficiently large $n$, there exist a collection $B_1,\dots, B_k$ of flat orientable $n$-manifolds such that each $B_i$ has the UCC property and for $i\neq j$ the manifolds $B_i$ and $B_j$ appear as cusp cross-sections in distinct commensurability classes of arithmetic hyperbolic manifolds.}
\begin{theorem}\label{thm:manyUCCclasses}
    \thmManyUCCclasses
\end{theorem}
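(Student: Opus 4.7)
The plan is to build the $B_i$ as variants of the construction underlying Theorem~\ref{thm:UCCbound}. Commensurability classes of non-compact arithmetic hyperbolic $(n+1)$-manifolds correspond to $\Q$-similarity classes of signature $(n,1)$ rational quadratic forms admitting an isotropic vector. By the algebraic criterion of \cite{McCoySell2024}, a flat manifold $B$ has the UCC property precisely when the space of positive-definite rational quadratic forms preserved by its holonomy representation reduces (up to $\Q$-similarity) to a single class. Producing $k$ pairwise non-commensurable UCC cusp cross-sections thus reduces to exhibiting $k$ flat $n$-manifolds each with this rigidity, whose unique invariant forms represent $k$ distinct $\Q$-similarity classes.

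First I would re-examine the construction from Theorem~\ref{thm:UCCbound} to isolate its essential ingredient — presumably an absolutely irreducible rational holonomy representation, so that Schur's lemma forces the space of invariant forms to be one-dimensional and UCC becomes automatic. I would then introduce a parameter into the construction to produce a whole family of variants: for example, varying over cyclotomic trace representations of prime-order cyclic groups $\Z/p$, or alternatively rescaling the underlying integral lattice by different squarefree integers supported at a chosen set of primes. The resulting invariant forms will then be distinguished by their local Hasse--Minkowski data at the parameter primes, so that any desired number $k$ of pairwise non-similar forms can be achieved once sufficiently many primes are available.

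The key verifications will be: (a) that each variant retains absolute irreducibility (or otherwise preserves one-dimensionality of the space of invariant forms), so that it remains UCC; (b) that the resulting invariant forms are pairwise non-similar over $\Q$, checkable by Hilbert symbol computations at the parameter primes; and (c) that each $B_i$ is realized as an \emph{orientable} flat manifold of a single common dimension $n$. The main obstacle I anticipate is (c): assembling $k$ building blocks of potentially differing native dimensions into orientable flat $n$-manifolds for one uniform $n$, while ensuring that the required padding does not introduce additional invariant similarity classes that would spoil UCC. This should be addressable by adjoining suitably chosen holonomy-trivial or tightly-controlled factors, relying on the methods already developed in the proof of Theorem~\ref{thm:UCCbound}; the value of $n$ at which this succeeds will then grow with $k$, as reflected in the ``sufficiently large $n$'' in the statement.
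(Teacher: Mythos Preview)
Your high-level strategy --- vary the construction over a family indexed by primes, distinguish the resulting commensurability classes by local invariants, and pad to a common dimension --- matches the paper's. But your diagnosis of the UCC mechanism is wrong, and this would derail the execution.

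The UCC manifolds built for Theorem~\ref{thm:UCCbound} do \emph{not} have absolutely irreducible holonomy; their holonomy representations are highly reducible (see the explicit decompositions in Lemmas~\ref{lem:C2}, \ref{lem:Emanifold}, \ref{lem:3mod4UCC}). UCC is established not by Schur's lemma forcing the invariant form to a line, but by computing that every holonomy form has prescribed discriminant and Hasse--Witt invariants and then invoking the projective-equivalence criterion of Proposition~\ref{prop:projective_invariants}. So your step ``isolate the absolutely irreducible ingredient'' would find nothing, and your verification (a) cannot proceed as stated. Separately, your alternative of rescaling the lattice by squarefree integers does nothing: holonomy forms are only defined up to projective (i.e.\ scaling) equivalence, so rescaling is invisible.

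The paper's actual route is to prove the explicit Theorem~\ref{thm:3mod4examples} and read off Theorem~\ref{thm:manyUCCclasses}: for each prime $p\equiv 3\bmod 4$ and each $n\geq 2p^2+2p+44$ one builds a UCC manifold (one of $E_p$, $E_p\times S^1$, $E_p\times F$, $E\times B_p$, according to $n\bmod 4$) whose unique projective class of holonomy forms is explicitly identified; distinct primes give projectively inequivalent forms, detected by the discriminant when $n\equiv 2\bmod 4$ and by $\varepsilon_p$ otherwise. Your cyclotomic/$\Z_p$ instinct is thus on target --- the prime enters through the manifold $B_p$ with holonomy $(\Z_p)^2$ --- and the dimension-padding is done not with holonomy-trivial tori (which would introduce free diagonal summands and destroy UCC) but with the manifold $E$ of Lemma~\ref{lem:Emanifold}, whose holonomy forms all satisfy $d=1$ and $\varepsilon_q=1$ for every $q$, so that multiplying by $E$ leaves the relevant invariants unchanged.
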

We deduce Theorem~\ref{thm:manyUCCclasses} from the following result. In principle, this allows one to provide a quantitative lower bound on the meaning of sufficiently large dimension for each $k$. 
\begin{theorem}\label{thm:3mod4examples}
   Let $p\equiv 3\bmod 4$ be a prime and $n$ an integer such that
   $n\geq 2p^2+2p+44$.
   
   Then there exists a flat $n$-manifold with the UCC property appearing as cusp cross section in the commensurability class of arithmetic hyperbolic manifolds defined by the form
   \[q(x_1, \dots, x_{n+2})=\begin{cases}
   px_1^2+ px_2^2+x_3^2 +\dots + x_{n+1}^2-x_{n+2}^2 & n\not\equiv 2\bmod 4 \\
    px_1^2+ x_2^2 +\dots + x_{n+1}^2-x_{n+2}^2 & n\equiv 2\bmod 8\\
    px_1^2+ px_2^2+px_3^2+x_4^2 +\dots + x_{n+1}^2-x_{n+2}^2 & n\equiv 6\bmod 8.
    \end{cases}\]
\end{theorem}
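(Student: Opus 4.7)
The plan is to apply the algebraic criterion from \cite{McCoySell2024}, which characterizes when a flat manifold $B$ with holonomy group $\Phi$ and rational holonomy representation $\rho\colon\Phi\to O_n(\Q)$ arises as a cusp cross-section in the commensurability class determined by a form $q$ of signature $(n+1,1)$ in terms of the existence of a suitable $\Phi$-equivariant embedding of $\rho$ into $q$. Accordingly, I would aim to build a flat $n$-manifold $B$ whose holonomy data is rigid enough to uniquely determine the ambient form up to similarity, and then verify that the unique compatible similarity class is the one described in the statement.

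For the construction, I would take $\Phi$ to contain a factor of $\Z/p$ whose rational irreducible representation (of dimension $p-1$) carries the desired local obstruction at $p$: since $p\equiv 3\bmod 4$, the Hilbert symbol $(-1,p)_p$ is non-trivial, which forces any $\Z/p$-invariant rational form on a cyclotomic module of this type to carry a factor of $p$ in its $\Q_p$-diagonalization. To this I would append further irreducible summands whose job is to pin down the signature, the discriminant, and the Hasse invariants at $2$ and at $\infty$. An orientable flat manifold realizing this holonomy datum can be produced using the Bieberbach-style constructions from \cite{McReynolds2009covers} (as already invoked in \cite{McCoySell2024}); the bound $n\geq 2p^2+2p+44$ reflects the aggregate dimension needed to fit several copies of these cyclotomic modules (contributing on the order of $2p(p+1)$) together with a constant amount of padding used in the cover construction.

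Showing that $B$ is a cusp cross-section of the listed form $q$ is then a direct verification of the criterion from \cite{McCoySell2024}. The number of $p$-coefficients appearing in $q$ exactly reflects the forced $p$-local contribution of the $\Z/p$-block, while the trichotomy in $n\bmod 8$ adjusts the global discriminant and the $2$-adic Hasse invariant to be consistent with the fixed signature $(n+1,1)$. The form $q$ is automatically isotropic over $\Q$ by Meyer's theorem, so it defines a non-compact arithmetic commensurability class.

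The hard part is the UCC property, that is, ruling out \emph{every} other commensurability class. I would argue from Hasse--Minkowski: the similarity class of any compatible ambient form $q'$ is pinned down by its signature and its local invariants, and the $\Z/p$-block of $\rho$ forces the $p$-local invariants of $q'$ to coincide with those of $q$. The auxiliary blocks of $\rho$ must then be chosen so that they force the remaining $2$-adic and archimedean invariants as well; orchestrating this simultaneously for all three residue classes of $n$ modulo $8$ is where the careful bookkeeping lies, and is what motivates the three-case shape of the quadratic form in the statement. Once each local invariant of $q'$ is shown to be forced equal to that of $q$, similarity follows and the UCC property is established.
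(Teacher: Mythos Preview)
Your outline misses the central mechanism that makes the UCC property work, and as written the argument would fail.

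The holonomy forms of a flat manifold are not determined by the holonomy representation alone: one must control \emph{all} $\Phi$-invariant positive definite forms on that representation. For a single cyclotomic $\Z/p$-block, Proposition~\ref{prop:cyclic_rep_calc} and Lemma~\ref{lem:modpholonomy} show that the discriminant is forced to be $p$, but the Hasse--Witt invariant $\varepsilon_q$ is determined only at primes $q$ where $-p$ is a square in $\Q_q$. At the remaining primes $\varepsilon_q$ is free, so a $\Z/p$-block alone does not pin down the projective class. Your plan to ``append further irreducible summands'' via toral extensions makes this worse, not better: adding summands enlarges the set of invariant forms, and there is no reason the residual invariants at $2$, at $p$, or at the primes where $-p$ is a nonsquare would be forced.

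The paper's proof supplies exactly the missing idea: one builds manifolds whose holonomy form is forced to have a \emph{symmetric} shape $g\oplus g$ or $h^{\oplus 4}$ by a geometric swap action. The manifold $E$ of Lemma~\ref{lem:Emanifold} is a $Q_8$-quotient of $\wt{B}^{\times 4}$, forcing every holonomy form to be $h^{\oplus 4}$ and hence to have $d=1$, $\varepsilon_q=1$ everywhere. The manifold $E_p$ of Lemma~\ref{lem:extensions_of_E} is a $\Z_2$-quotient of $\wt{E}\times B_p\times B_p$ where the involution swaps the two $B_p$ factors; this forces $f=g\oplus h\oplus h$ with $d(h)=p$, so $\varepsilon_q(f)=(p,p)_q$ regardless of the undetermined part of $h$. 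Products of $E_p$ with $S^1$ or with the manifold $F$ of Lemma~\ref{lem:3mod4UCC} then cover the other residue classes, and $E\times B_p$ handles $n\equiv 2\bmod 4$ via Theorem~\ref{thm:p3mod4UCCexamples}. The dimension bound $2p^2+2p+44$ comes from $\dim E_p\geq 2p^2+2p+8$ plus $\dim F=39$ (with the $-3$ from the overlap in residue classes). None of this structure---the swap actions, the resulting $g\oplus g$ rigidity, or the specific building blocks $E$, $E_p$, $F$---appears in your proposal.
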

\begin{table}[h]
    \centering
    \begin{tabular}{|c|c|c|} \hline
		Dimensions & Orientable  & Construction  \\ \hline
            $ 1 $ & Y & $S^1$  \\ \hline
    	$ 6$ & N &$C$ of Lemma~\ref{lem:C2}\\ \hline
        $ n\geq 10$, $n\equiv 2 \bmod 4 $ & Y & $B_3$ of Lemma~\ref{lem:Bp}  \\ 
             &  & $C$ of Lemma~\ref{lem:C2}\\ \hline
            $n\geq 12$, $n\equiv 0 \bmod 4$ & Y & $E$ of Lemma~\ref{lem:Emanifold} \\ \hline
            $n\geq 13$, $n\equiv 1 \bmod 4$ & Y & $E\times S^1$ \\ \hline
            
            $ n\geq 35, n\equiv 3 \bmod 4 $ & Y & $F$ of Lemma~\ref{lem:3mod4UCC} \\ \hline
	\end{tabular}
    \caption{A summary of the dimensions for which flat $n$-manifolds with the UCC property are known to exist} 
    \label{tab:UCC_constructions}
\end{table}

We say that two flat $n$-manifolds $B_1$ and $B_2$ form a \emph{non-arithmetic pair} if there is no commensurability class of arithmetic hyperbolic manifolds that contains both $B_1$ and $B_2$ as cusp cross-sections. We say that the pair is orientable if both $B_1$ and $B_2$ are orientable. It was observed in \cite{McCoySell2024} that orientable non-arithmetic pairs exist in all sufficiently large even dimensions ($n\geq 36$). Theorem~\ref{thm:manyUCCclasses} implies that orientable non-arithmetic pairs exist in all sufficiently large dimensions. In particular, Theorem~\ref{thm:manyUCCclasses} gives examples of non-arithmetic pairs with the stronger property that each element of the pair also has the UCC property. However, we are also able to construct non-arithmetic pairs in smaller dimensions.
\begin{theorem}\label{thm:non-arith_pairs}
    For all $n\geq 24$ there exists a pair of flat $n$-manifolds forming an orientable non-arithmetic pair.
\end{theorem}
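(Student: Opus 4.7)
The plan is to construct, for each $n \geq 24$, two flat orientable $n$-manifolds whose sets of admissible arithmetic commensurability classes (as determined by the algebraic characterization of \cite{McCoySell2024}) are disjoint. Recall that a commensurability class of non-compact arithmetic hyperbolic $(n+1)$-manifolds is specified by a rational equivalence class of quadratic forms of signature $(n+1,1)$ over $\Q$ that represents zero nontrivially, and that whether a flat manifold $B$ appears as a cusp cross-section in that class is governed by local Hilbert symbol and Hasse--Witt compatibility conditions between the form and the holonomy data of $B$.

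My approach would be to fix two distinct primes $p, q \equiv 3 \bmod 4$ and construct flat $n$-manifolds $B_1$ and $B_2$ whose holonomies enforce incompatible local conditions at these primes. Specifically, $B_1$ would be a relative of the $B_p$ of Lemma~\ref{lem:Bp}, arranged so that any defining form of a commensurability class containing $B_1$ must display a prescribed local behaviour at $p$; the manifold $B_2$ would be constructed analogously with respect to $q$. Since the admissible classes for $B_1$ share a specific local condition at $p$ that the admissible classes for $B_2$ force to fail, no single commensurability class can contain both as cusp cross-sections.

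To obtain every $n \geq 24$, I would stabilise by taking products with $S^1$ (to toggle between parities) and with higher-dimensional tori or orientable building blocks from Table~\ref{tab:UCC_constructions}, using that such benign factors impose no new restrictions on the ambient commensurability class. Orientability is maintained throughout by starting from orientable base blocks and orientable stabilising factors, with care taken that holonomies remain inside the special orthogonal group.

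The main obstacle will be verifying that the local obstructions at $p$ and at $q$ are genuinely incompatible rather than merely distinct. This reduces to an explicit computation, via the criterion of \cite{McCoySell2024}, of the Hilbert symbols produced by the holonomy representations of $B_1$ and $B_2$, followed by a Hasse--Minkowski argument showing that no quadratic form of signature $(n+1,1)$ over $\Q$ can simultaneously meet both sets of constraints. The lower bound $n = 24$ is expected to arise from the minimal total dimension needed to assemble two matching-dimension orientable flat manifolds whose holonomies carry enough irreducible summands to force both obstructions; improving below this threshold would require genuinely new building blocks beyond those catalogued in Table~\ref{tab:UCC_constructions}.
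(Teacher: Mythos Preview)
Your proposal has a genuine gap in the stabilisation step. You write that products with $S^1$ or tori ``impose no new restrictions on the ambient commensurability class,'' but the effect is the opposite: such factors \emph{relax} the constraints. By Lemma~\ref{lem:product_hol1}, a holonomy form for $B \times S^1$ is $f \oplus \langle a \rangle$ with $f$ a holonomy form for $B$ and $a \in \Q_{>0}$ arbitrary; hence $d(f \oplus \langle a\rangle) = a\,d(f)$ is unconstrained, and $\varepsilon_q(f \oplus \langle a\rangle) = \varepsilon_q(f)\,(d(f),a)_q$ is unconstrained at any prime $q$ where $d(f)$ is a non-square in $\Q_q$. Since your proposed base blocks $B_p$ have holonomy forms with discriminant $p$ (Lemma~\ref{lem:modpholonomy}), the local obstruction you hope to carry is destroyed the moment you multiply by $S^1$ or a torus. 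The paper's odd-dimensional constructions avoid exactly this problem by stabilising instead with the manifold $E$ of Lemma~\ref{lem:Emanifold}, whose holonomy forms all satisfy $d=1$ and $\varepsilon_q=1$ for every $q$, so that a product with $E$ preserves the distinguishing invariants.

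A second gap is that you never actually name the invariant separating $B_1$ from $B_2$, nor verify it is a projective invariant in the relevant dimension parity (Proposition~\ref{prop:projective_invariants}). The paper does not use two primes against each other. In even dimensions it distinguishes by discriminant ($d=3$ for $B_3$ versus $d=1$ for the manifold $C$ of Lemma~\ref{lem:C2}; see Lemma~\ref{lem:non-arithpairs-even_dims}). In odd dimensions it distinguishes by the Hasse--Witt invariant at the prime $2$, using purpose-built mapping tori $M_{k,\ell}$ (Lemma~\ref{lem:7and15mapping_tori}) paired against $E \times S^1$ or $M_{1,0} \times E$ (Lemma~\ref{lem:odd_dim_non-arith_pairs}). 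The bound $n \geq 24$ does not come from an abstract dimension count but from the fact that $n=23$ is the last residue-class gap left uncovered by these explicit constructions.
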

Note that if a hyperbolic manifold has two cusps whose cross-sections form a non-arithmetic pair, then it must be non-arithmetic.

A complete list of dimensions in which manifolds with the UCC property are known to exist can be found in  Table~\ref{tab:non-arith pairs}.
In fact, the results of \cite{McCoySell2024} imply that non-arithmetic pairs actually have the stronger property that $B_1$ and $B_2$ can never appear as cusp cross-sections in the same commensurability class of quasi-arithmetic hyperbolic manifolds.

\subsection*{Remaining dimensions}
By examining the explicit tabulation of flat manifolds in small dimensions, one can check the existence of flat manifolds with various properties in dimensions $n\leq 6$. This analysis shows that there are flat manifolds no flat manifolds with the UCC property in dimensions $n=2,3,4$ and $5$ and that there are no orientable manifolds with the UCC property for $n=6$ \cite{GrimbertMcCoySell}. This leaves nine dimensions where the existence of flat $n$-manifolds with the UCC property is unknown.

\begin{question}
    For which of the dimensions $n=7,8,9,11,15,19,23,27$ and $31$ does there exist an (orientable) flat $n$-manifold with the UCC property?
\end{question}
The analysis of flat manifolds in low-dimensions also shows that there are no arithmetic pairs in dimensions $n\leq 6$ \cite{GrimbertMcCoySell}. Combined with the constructions of this paper, this leaves only nine dimensions in which the existence of orientable non-arithmetic pairs is unknown.
\begin{question}
    For which of the dimensions $n=7,8,9,11,12,15,16,17$ and $23$ does there exist an orientable non-arithmetic pair of dimension $n$?
\end{question}
\begin{table}[h]
    \centering
    \begin{tabular}{|c|c|c|} \hline
		Dimensions & Orientable  & Construction \\ \hline
            
            $ n\geq 10$, $n\equiv 2 \bmod 4 $ & Y &Lemma~\ref{lem:non-arithpairs-even_dims} \\ \hline
            $13$ & Y &Lemma~\ref{lem:odd_dim_non-arith_pairs} \\ \hline
            $16$ & N &Lemma~\ref{lem:non-arithpairs-even_dims} \\ \hline
            $19$ & Y &Lemma~\ref{lem:odd_dim_non-arith_pairs} \\ \hline
            $n\geq 20$, $n\equiv 0 \bmod 4$ & Y & Lemma~\ref{lem:non-arithpairs-even_dims} \\ \hline
            $n\geq 21$, $n\equiv 1 \bmod 4$ & Y & Lemma~\ref{lem:odd_dim_non-arith_pairs} \\ \hline
            $n\geq 27$, $n\equiv 3 \bmod 4$ & Y & Lemma~\ref{lem:odd_dim_non-arith_pairs} \\ \hline
	\end{tabular}
    \caption{A summary of the dimensions for which non-arithmetic pairs are known to exist.}
    \label{tab:non-arith pairs}
\end{table}


\section{Preliminaries}
\subsection{Rational quadratic forms}
Let $f: \Q^n \rightarrow \Q$ be a non-degenerate rational quadratic form. 
The orthogonal group of $f$ is the matrix group
\[ 
O(f;\Q)=\{A\in GL_n(\Q) \mid f(Av)=f(v)\, \forall v\in \Q^n\}.
\]
Two rational quadratic forms $f$ and $g$ are (rationally) equivalent if there exists a matrix $C\in GL_n(\Q)$ such that
\[
\text{$f(v)=g(Cv)$ for all $v\in \Q^n$.}
\]
We recall the rational equivalence class of a non-degenerate rational quadratic form $f$ is completely determined by combination of its signature, discriminant $d(f)\in \Qsquare$ and Hasse-Witt invariant $\varepsilon_p(f)\in \{\pm1\}$ at each prime $p$ \cite{Serre_arithmetic}.  Since we will be making calculations with these invariants throughout this article, we briefly state their definitions and some basic properties.

Given $ a, b \in \mathbb{Q} $ and a prime $ p $, the Hilbert symbol $ (a,b)_p $ is defined to be $1$ if the equation $ ax^2 + by^2 = z^2 $ has a nonzero solution in $ \mathbb{Q}_p $, and $-1$ otherwise.  The multiplicativity of the Hilbert symbol is well established, and the other properties below follow readily.
\begin{prop}
    Let $ a, b, c \in \mathbb{Q} $ and $ q $ any prime.  Then:
    \begin{itemize}
        \item $ (a,b)_q (a,c)_q = (a,bc)_q $,
        \item $ (a,1)_q = 1 $, and
        \item $ (a,a)_q = (a,-1)_q $.
    \end{itemize}
\end{prop}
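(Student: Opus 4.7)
The plan is to accept the multiplicativity statement as given (the paper explicitly notes that it is well established, so a full derivation is not the focus here) and then derive the remaining two identities from multiplicativity together with a one-line verification from the definition.

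For the identity $(a,1)_q = 1$, the cleanest approach is to apply multiplicativity to the trivial factorization $1 = 1 \cdot 1$, which yields
\[
(a,1)_q = (a, 1 \cdot 1)_q = (a,1)_q \cdot (a,1)_q = (a,1)_q^2.
\]
Since Hilbert symbols take values in $\{\pm 1\}$, the relation $x = x^2$ forces $(a,1)_q = 1$. Equivalently, one could appeal directly to the definition by exhibiting the nonzero triple $(x,y,z) = (0,1,1) \in \mathbb{Q}_q^3$ as a solution to $ax^2 + y^2 = z^2$.

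For the identity $(a,a)_q = (a,-1)_q$, the idea is to factor $a = (-1)\cdot(-a)$ and apply multiplicativity in the second argument to obtain
\[
(a,a)_q = (a,-1)_q \cdot (a,-a)_q.
\]
The identity then reduces to the auxiliary claim $(a,-a)_q = 1$. This in turn follows immediately from the definition, since the triple $(x,y,z) = (1,1,0) \in \mathbb{Q}_q^3$ is nonzero and satisfies
\[
ax^2 + (-a)y^2 = a - a = 0 = z^2,
\]
exhibiting a nonzero solution to $ax^2 + (-a)y^2 = z^2$ in $\mathbb{Q}_q$.

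The only substantive obstacle here is the multiplicativity statement itself, which is a genuine theorem in the arithmetic of local fields (typically proved either via the identification of the Hilbert symbol with a norm residue symbol, or by an explicit case analysis at each residue characteristic). Since the paper treats this as background, the proposition as a whole reduces to the two short verifications sketched above, and no further case analysis or $p$-adic technology is required.
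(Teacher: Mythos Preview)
Your proposal is correct and aligns with the paper's approach: the paper does not give a detailed proof at all, merely asserting that multiplicativity is well established and that the other two identities ``follow readily.'' You have simply filled in those details, deriving $(a,1)_q=1$ and $(a,a)_q=(a,-1)_q$ from multiplicativity together with the obvious explicit solutions, exactly as intended.
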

Now we are ready to state the definitions of the discriminant and Hasse-Witt invariants.
\begin{defn}
    Let $ f $ be a diagonal quadratic form given by $ f(x) = a_1x_1^2 + \ldots + a_nx_n^2 $ with $ a_i \in \mathbb{Q} $ and $ q \in \mathbb{Z} $ a prime.
    \begin{itemize}
        \item The \emph{discriminant} of $ f $ is $ d(f) = \prod a_i \in \Qsquare $.
        \item The \emph{Hasse-Witt invariant} of $ f $ at $ q $ is $ \varepsilon_q(f) = \prod_{i<j} (a_i, a_j)_q $.
    \end{itemize}
\end{defn}
Any quadratic form is equivalent to a diagonal form, so for any non-diagonal form we define these invariants to be those associated to an equivalent diagonal form. Note the discriminant is only well-defined up to multiplication by squares, and that each $ \varepsilon_q(f) \in \{ \pm 1 \} $. The formula above is more concise and easier to compute with, but morally, the Hasse-Witt invariant at $ p $ encodes the equivalence class of $ f $ interpreted over $ \mathbb{Q}_p $.

It follows easily from the definitions that these invariants are well behaved under direct sum of quadratic forms.
\begin{prop}\label{prop:sum_of_forms}
    Let $f$ and $g$ be two non-degenerate rational quadratic forms. Then
    \[
    d(f\oplus g)=d(f)d(g)\in \Qsquare
    \]
    and the Hasse-Witt invariants satisfy
    \[
    \varepsilon_p(f\oplus g)=\varepsilon_p(f)\varepsilon_p(g)\left(d(f),d(g)\right)_p,
    \]
    for every prime $p$.\qed
\end{prop}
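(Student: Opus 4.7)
The plan is to reduce everything to a direct computation by choosing diagonalizations of $f$ and $g$. First I would write $f \sim \sum_{i=1}^m a_i x_i^2$ and $g \sim \sum_{j=1}^n b_j y_j^2$ using the standard diagonalization result. Then $f \oplus g$ has the obvious diagonal form with coefficients $a_1, \dots, a_m, b_1, \dots, b_n$, and both invariants can be read off from this list.

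The discriminant statement then falls out immediately: $d(f \oplus g) = \prod_i a_i \cdot \prod_j b_j = d(f) d(g)$ modulo squares, which is the first claim.

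For the Hasse-Witt invariant, I would partition the pairs $k<\ell$ appearing in the defining product for $f \oplus g$ into three classes --- pairs lying entirely among the $a$'s, pairs lying entirely among the $b$'s, and mixed pairs with one coordinate from each. The first two sub-products are, by definition, precisely $\varepsilon_p(f)$ and $\varepsilon_p(g)$. The mixed contribution is $\prod_{i,j} (a_i, b_j)_p$, and by repeated application of the bilinearity $(a,bc)_q = (a,b)_q(a,c)_q$ recorded just above the definitions (collapsing first in the $b$ argument, then in the $a$ argument) this equals $\left(\prod_i a_i, \prod_j b_j\right)_p = (d(f), d(g))_p$. Multiplying the three contributions yields the claimed formula.

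The only point requiring care --- more a matter of citation than a genuine obstacle --- is that $d(f)$ and $\varepsilon_p(f)$ are defined through a choice of diagonalization, so the argument implicitly relies on the standard fact (asserted above the definitions) that these quantities are invariants of the rational equivalence class, and hence independent of the chosen diagonalization. Granting this, the proof is a short bookkeeping calculation with no further subtleties.
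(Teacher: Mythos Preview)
Your proposal is correct and is exactly the direct computation from the definitions that the paper has in mind; indeed the paper states the proposition without proof, remarking only that ``it follows easily from the definitions'' and appending a \qedsymbol. Your partitioning of the pairs and use of bilinearity of the Hilbert symbol is the natural way to make this precise.
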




\begin{defn}[Projective equivalence]
Two quadratic forms $ f $ and $ g $ over $ \mathbb{Q} $ are \emph{projectively equivalent (over $ \mathbb{Q} $)} if there exists positive $ m \in \mathbb{Q}_{>0} $ such that $ mf $ and $ g $ are rationally equivalent.
\end{defn}
The projective equivalence class of a rational quadratic form can also be characterized in terms of the discriminant and Hasse-Witt invariants.
\begin{prop}[{\cite[Proposition~5.4]{McCoySell2024}}]
\label{prop:projective_invariants}
    Let $f$ and $g$ be two positive definite quadratic forms of rank $n$ over $ \mathbb{Q} $. Then $f$ and $g$ are projectively equivalent if and only if one of the following conditions hold:
    \begin{enumerate}[(i)]
        \item $n\equiv 0 \bmod 4$, $d(f)=d(g)=d$ and $\varepsilon_p(f)=\varepsilon_p(g)$ for all primes $p$ such that $d$ is a square in $\Q_p$;
        \item $n\equiv 1 \bmod 4$ and $\varepsilon_p(f)=\varepsilon_p(g)$ for all primes $p$;
        \item $n\equiv 2 \bmod 4$, $d(f)=d(g)=d$ and $\varepsilon_p(f)=\varepsilon_p(g)$ for all primes $p$ such that $-d$ is a square in $\Q_p$ or
        \item $n\equiv 3 \bmod 4$ and $(d(f),-1)_p\varepsilon_p(f)=(d(g),-1)_p\varepsilon_p(g)$ for all primes $p$.
    \end{enumerate}
   \qed
\end{prop}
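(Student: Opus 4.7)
The plan is to reduce projective equivalence to rational equivalence, compute the invariants of $mf$, and then invoke the classical realization theorem for Hilbert symbols. Recall that positive definite rational quadratic forms of rank $n$ are rationally equivalent if and only if they share discriminant (in $\Qsquare$) and Hasse--Witt invariant at every prime; the signature matches automatically since $m>0$, and the infinite place is handled since $\varepsilon_\infty(f)=1$ for positive definite $f$. Thus $f$ and $g$ are projectively equivalent if and only if there exists $m\in\Q_{>0}$ with $d(mf)=d(g)$ in $\Qsquare$ and $\varepsilon_p(mf)=\varepsilon_p(g)$ for every prime $p$.

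The key transformation formulas are $d(mf)=m^n d(f)$, so $d(mf)=d(f)$ mod squares when $n$ is even and $d(mf)=m\,d(f)$ mod squares when $n$ is odd, together with
\[
\varepsilon_p(mf) = (m,-1)_p^{\binom{n}{2}}\,(m,d(f))_p^{\,n-1}\,\varepsilon_p(f),
\]
which follows from the bilinearity of the Hilbert symbol and the identity $(m,m)_p=(m,-1)_p$. Tracking the parity of $\binom{n}{2}$ and $n-1$ in each residue class modulo $4$ reduces this to $(m,d(f))_p\varepsilon_p(f)$, $\varepsilon_p(f)$, $(m,-d(f))_p\varepsilon_p(f)$, and $(m,-1)_p\varepsilon_p(f)$ for $n\equiv 0,1,2,3\bmod 4$ respectively.

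In the odd cases (ii) and (iv) the discriminant can be adjusted freely by scaling; taking $m=d(f)d(g)\in\Q_{>0}$ yields $d(mf)=d(g)$ in $\Qsquare$, and substituting this choice into the Hasse--Witt formulas produces the stated conditions directly in both directions, with no existence theorem required. For case (iv) the substitution $(m,-1)_p = (d(f),-1)_p(d(g),-1)_p$ rearranges to the symmetric identity $(d(f),-1)_p\varepsilon_p(f)=(d(g),-1)_p\varepsilon_p(g)$. In the even cases (i) and (iii) the discriminant is unchanged by scaling, so $d(f)=d(g)=d$ is forced, and writing $D=d$ (respectively $D=-d$), the problem becomes one of finding $m\in\Q_{>0}$ satisfying the prescribed Hilbert symbols $(m,D)_p=\varepsilon_p(f)\varepsilon_p(g)$ at every prime.

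The main obstacle is establishing existence of such an $m$ in cases (i) and (iii), which I would handle via the classical realization theorem for Hilbert symbols: any finitely-supported family of signs indexed by the places of $\Q$ satisfying the global product formula, and vanishing at places where $D$ is a local square, is realized by some $m\in\Q^\times$. Setting $\epsilon_\infty=1$, the product formula for the Hasse--Witt invariant together with $\varepsilon_\infty(f)=\varepsilon_\infty(g)=1$ supplies global consistency, and the hypothesized equality of $\varepsilon_p$'s at primes where $D$ is a square in $\Q_p$ supplies the local vanishing condition. Positivity of $m$ is automatic in case (iii) since $(m,-d)_\infty=1$ forces $m>0$ when $d>0$; in case (i), the sign of $m$ is unconstrained at infinity (as $d>0$ is a square in $\R$), and modifying $m$ within its square class by an element of $\Q_{>0}$ lying in the kernel of the map $x\mapsto ((x,d)_v)_v$ (which is nonempty by the same realization theorem) delivers a positive representative. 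The ``only if'' directions in cases (i) and (iii) follow by reading the transformation formulas backwards and evaluating at primes where $D$ is a local square to obtain $(m,D)_p=1$.
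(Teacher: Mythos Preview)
The paper does not prove this proposition; it is quoted from \cite{McCoySell2024} and closed with a \qed. Your approach is the natural one and is essentially correct: the transformation formulas $d(mf)=m^{n}d(f)$ and
\[
\varepsilon_p(mf)=(m,-1)_p^{\binom{n}{2}}(m,d(f))_p^{\,n-1}\varepsilon_p(f)
\]
are right, the reduction modulo $4$ is carried out correctly, the odd cases (ii) and (iv) go through directly with the forced choice $m=d(f)d(g)$ (positive because $f,g$ are positive definite), and the appeal to the Hilbert-symbol realization theorem is exactly the right tool for the even cases, with the product formula and positive-definiteness supplying the global and archimedean compatibilities.

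There is one slip in your handling of positivity in case~(i). You propose to adjust $m$ ``by an element of $\Q_{>0}$ lying in the kernel of the map $x\mapsto((x,d)_v)_v$''. If the $m$ produced by the realization theorem happens to be negative, multiplying by a \emph{positive} kernel element leaves it negative; what you actually need is a \emph{negative} kernel element. Such an element is available: since $d>0$ and $(-d,d)_v=1$ at every place $v$ (from the identity $(a,-a)_v=1$), the class of $-d$ lies in the kernel, and replacing $m$ by $-dm$ yields a positive scalar with the same Hilbert symbols against $d$. With this correction the argument is complete.
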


\subsection{Holonomy forms}
Let $B$ be a compact flat $n$-manifold with fundamental group $\Gamma$. Then $\Gamma$ is a torsion-free group which we may identify as a discrete subgroup of $O(n)\ltimes \R^n$. The translation subgroup $T\leq \Gamma$ of elements acting by translations on $\R^n$ is a normal abelian subgroup. Moreover, the first Bieberbach theorem implies that $T$ is a maximal abelian subgroup of $\Gamma$ of finite index and that it is isomorphic to $\Z^n$. Thus identifying $T\cong \Z^n$ yields a short exact sequence
\begin{equation}
    1\rightarrow \Z^n \rightarrow \Gamma \rightarrow H \rightarrow 1,
\end{equation}
where the finite group $H$ is the \emph{holonomy group} of $B$. Letting $H$ act on $\Z^n$ by conjugation yields a representation:
\[
\rho_{\mathrm{hol}} : H \rightarrow GL_n(\Z).
\]
Since the translation subgroup is a maximal abelian subgroup of $\Gamma$, this representation is faithful.  Choosing a different identification of $T$ with $\Z^n$ has the effect of conjugating the representation $\rho_{\rm hol}$ by an element of $GL_n(\Z)$. Thus the conjugacy class of the representation $\rho_{\rm hol}$ is independent of this choice. It will also be useful to note that the manifold $B$ is orientable if and only if the image of $\rho_{\mathrm{hol}}$ is contained in $SL_n(\Z)$.
\begin{defn}
    We say that a (rational) \emph{holonomy representation} for $B$ is any rational representation
    \[
\rho : H \rightarrow GL_n(\Q)
\]
in the conjugacy class of representations defined by $\rho_{\mathrm{hol}}$.
A \emph{holonomy form} for $B$ is a positive definite rational quadratic form $f$ of rank $n$ such that $B$ has a holonomy representation 
\[
\rho:H \rightarrow O(f;\Q).
\]
\end{defn}
It is not hard to check that set of all possible holonomy forms for a flat manifold $B$ is closed under projective equivalence.

In order to calculate the holonomy forms of a given flat manifold it will often be convenient to decompose the holonomy representation into subrepresentations.
\begin{prop}[{\cite[Proposition 6.1]{McCoySell2024}}]\label{prop:form_decomposition}
    Let $\rho=\sigma_1 \oplus \dots \oplus \sigma_\ell$ be a direct sum of rational representations of a finite group $G$. If the image of $\rho$ is contained $O(f;\Q)$, for a non-degenerate rational quadratic form $f$, then $f$ is equivalent to
    \[
    f\cong g_1 \oplus \dots \oplus g_\ell,
    \]
    where each $g_i$ is a quadratic form such that the image of $\sigma_i$ is contained in $O(g_i;\Q)$.
    \qed
\end{prop}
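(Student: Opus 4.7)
The plan is to prove the result by induction on $\ell$, with the trivial base case $\ell=1$. For the inductive step, group the decomposition as $\rho=\sigma_1\oplus(\sigma_2\oplus\dots\oplus\sigma_\ell)$ and apply the $\ell=2$ statement to split $f\cong g_1\oplus f'$, where $g_1$ is $\sigma_1$-invariant and $f'$ is invariant under $\sigma_2\oplus\dots\oplus\sigma_\ell$; the inductive hypothesis then decomposes $f'$ further.

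For the core case $\ell=2$, write $V=V_1\oplus V_2$ for the representation space of $\rho=\sigma_1\oplus\sigma_2$. The goal is to produce a new $G$-invariant decomposition $V=V_1'\oplus V_2'$ with $V_i'\cong V_i$ as rational $G$-representations and $V_1'\perp_f V_2'$. Given such a decomposition, one defines $g_i$ by pulling $f|_{V_i'}$ back to $V_i$ along a $G$-equivariant isomorphism $V_i\cong V_i'$, which exists by Maschke's theorem since $\Q[G]$ is semisimple. To produce $V_1'$, parametrize the $G$-invariant complements of $V_2$ in $V$ by $G$-equivariant maps $\phi\colon V_1\to V_2$ via $V_1'=\{v+\phi(v):v\in V_1\}$. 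The orthogonality condition $f(V_1',V_2)=0$ becomes the equation $\beta\circ\phi=-\alpha$ in $\Hom_G(V_1,V_2^*)$, where $\alpha\colon V_1\to V_2^*$ and $\beta\colon V_2\to V_2^*$ are the $G$-equivariant maps $\alpha(v)=f(v,\cdot)|_{V_2}$ and $\beta(w)=f(w,\cdot)|_{V_2}$. When $f|_{V_2}$ is non-degenerate, $\beta$ is an isomorphism and $\phi=-\beta^{-1}\circ\alpha$ solves this equation $G$-equivariantly; a symmetric construction handles non-degenerate $f|_{V_1}$.

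The main technical obstacle is the case where both $f|_{V_1}$ and $f|_{V_2}$ are degenerate, so that $\beta$ has non-trivial kernel. In this case the radicals $R_i=\mathrm{rad}(f|_{V_i})\subseteq V_i$ are $G$-invariant, and Maschke provides $G$-invariant decompositions $V_i=R_i\oplus V_i^\circ$ with $f|_{V_i^\circ}$ non-degenerate. Non-degeneracy of $f$ on $V$ forces each $R_i$ to pair non-degenerately via $f$ with some $G$-invariant subspace of the opposite $V_j$, and a careful sequence of semisimple splittings and $G$-equivariant changes of basis arranges an orthogonal $G$-invariant decomposition of $V$ in which the radicals are absorbed into $G$-invariant hyperbolic summands paired with their dual partners. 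These hyperbolic summands each carry a non-degenerate $G$-invariant form to which the easy case above applies, and combining them with the $V_i^\circ$ completes the construction. Managing this bookkeeping with the radicals is the primary technical challenge of the proof.
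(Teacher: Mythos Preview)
The paper does not prove this proposition; it is quoted from \cite{McCoySell2024} with only a \qed, so there is no in-paper argument to compare against.

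Your inductive reduction to $\ell=2$ and the treatment of the case where $f|_{V_1}$ or $f|_{V_2}$ is non-degenerate are correct and standard. This already suffices for every use made in the present paper: holonomy forms are positive definite, so the restriction of $f$ to any subspace is automatically non-degenerate and only your easy case is ever invoked.

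For the general (indefinite) statement, however, there is a real gap in your degenerate case. After isolating the radicals into hyperbolic summands you assert that ``the easy case above applies'' to each such summand. But in a hyperbolic piece $R\oplus S$ the restriction of $f$ to $R$ is zero by construction, and the restriction to $S$ may also be zero once everything anisotropic has been split off; neither half satisfies the non-degeneracy hypothesis of your easy case, so the argument does not close as written. What is actually needed is an additional step: inside each hyperbolic summand one must produce a \emph{new} $G$-invariant copy of $R$ on which $f$ restricts non-degenerately. This is true, but it requires an argument you have not supplied. One route: reduce to a single irreducible $W$ (distinct isotypic components are automatically $f$-orthogonal over $\Q$, since every rational irreducible of a finite group is self-dual by averaging), and then check that among the $G$-invariant copies of $W$ inside $W\oplus W$, parametrized by $\mathbb{P}^1(D)$ with $D=\mathrm{End}_G(W)$, the totally isotropic ones form a proper subset. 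Equivalently and more cleanly, after passing to a single isotypic component $W^{\oplus m}$ one can identify $G$-invariant symmetric forms with Hermitian matrices over $D$ (for the adjoint involution determined by a fixed $G$-invariant symmetric form on $W$) and diagonalize those directly by the usual anisotropic-vector argument in characteristic zero.
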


The toral extension construction of Auslander-Vasquez allows us to construct new flat manifolds from old \cite{Vasquez1970flat}.
\begin{prop}[{cf. \cite[Proposition~3.5]{McCoySell2024}}]\label{prop:toral_extension}
    Let $B$ be a compact flat manifold with holonomy representation $\rho: H\rightarrow GL_n(\Q)$. Then for any representation $\sigma: H\rightarrow GL_m(\Z)$, there is a compact flat $(n+m)$-manifold $\widetilde{B}$ with holonomy representation $\rho \oplus \sigma$.\qed
\end{prop}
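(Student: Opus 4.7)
The plan is to build the Bieberbach group of $\widetilde{B}$ directly as a semidirect product and then realize it isometrically. Writing $\Gamma=\pi_1(B)$ in the exact sequence $1 \to \Z^n \to \Gamma \xrightarrow{\pi} H \to 1$, I would define
\[
\widetilde{\Gamma} := \Z^m \rtimes_{\sigma\circ\pi} \Gamma,
\]
where $\Gamma$ acts on $\Z^m$ through $H$ via $\sigma$, and take $\widetilde{B} := \R^{n+m}/\widetilde{\Gamma}$ once a free isometric cocompact action is in hand.

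The first thing to verify is that $\widetilde{\Gamma}$ has the correct abstract structure. Since the translation subgroup $\Z^n \leq \Gamma$ lies in the kernel of $\pi$, it acts trivially on $\Z^m$, and so $\Z^m \rtimes \Z^n$ is simply an abelian subgroup $\Z^{n+m}$ sitting normally inside $\widetilde{\Gamma}$ with quotient $H$. Conjugation by lifts of elements of $H$ acts on the $\Z^n$ factor via $\rho$ and on the $\Z^m$ factor via $\sigma$, yielding the claimed holonomy representation $\rho \oplus \sigma$. Next I would check that $\widetilde{\Gamma}$ is torsion-free. A direct calculation gives
\[
(v,\gamma)^k = \bigl(v + \sigma(\pi(\gamma))v + \cdots + \sigma(\pi(\gamma))^{k-1}v,\ \gamma^k\bigr),
\]
and if this is trivial, then $\gamma^k = 1$ forces $\gamma = 1$ by torsion-freeness of $\Gamma$, after which the first entry collapses to $kv$, forcing $v = 0$.

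Finally I would realize $\widetilde{\Gamma}$ as a discrete cocompact subgroup of $\Isom(\R^{n+m})$. Averaging any positive definite inner product on $\R^m$ over the finite group $H$ yields an $H$-invariant one, with respect to which $\sigma$ takes values in the orthogonal group. Then an element $(v,\gamma) \in \widetilde{\Gamma}$ acts diagonally on $\R^n \oplus \R^m$ by the given Bieberbach action of $\Gamma$ on $\R^n$ together with the affine map $x \mapsto \sigma(\pi(\gamma))x + v$ on $\R^m$. Freeness and proper discontinuity then follow from the torsion-freeness established above combined with the existing Bieberbach action on $\R^n$ and the standard action of $\Z^m$ on $\R^m$. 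The only subtle step is the torsion-freeness check; it crucially uses that $\sigma$ is integral, so that the first coordinate of $(v,\gamma)^k$ vanishes exactly when $v=0$, and it is the one place where the hypothesis that $B$ is a genuine manifold (and not merely an orbifold) is invoked.
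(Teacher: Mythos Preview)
Your argument is correct and is precisely the Auslander--Vasquez toral extension construction that the paper invokes by citation (the paper itself gives no proof, only the reference and a \qed). One small expository quibble: the integrality of $\sigma$ is needed so that $\Z^m$ is actually $\Gamma$-stable and the semidirect product $\Z^m \rtimes_{\sigma\circ\pi}\Gamma$ makes sense, not for the torsion-freeness step---once $\gamma=1$ the first coordinate of $(v,\gamma)^k$ is $kv$ regardless of integrality.
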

Another useful fact is that $b_1$ of a flat manifold is encoded in the holonomy representation.
\begin{prop}[{\cite[Corollary~1.3]{Hiller1986flat}}]\label{prop:b1}
    Let $M$ be a compact flat manifold. Then $b_1(B)$ is equal to the number of trivial subrepresentations of the holonomy representation.\qed
\end{prop}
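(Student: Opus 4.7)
The plan is to compute $b_1(B) = \dim_\Q H_1(\Gamma;\Q)$ directly from the Bieberbach short exact sequence $1 \to \Z^n \to \Gamma \to H \to 1$, reduce to an $H$-invariants computation for the rational conjugation action on $\Z^n\otimes\Q$, and then recognize the resulting dimension as the number of trivial summands of $\rho_{\mathrm{hol}}$.

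The main step is to apply the Lyndon--Hochschild--Serre spectral sequence to this short exact sequence with $\Q$-coefficients,
\[
E^2_{p,q} = H_p(H; H_q(\Z^n;\Q)) \Longrightarrow H_{p+q}(\Gamma;\Q).
\]
Since $H$ is a finite group and $\Q$ has characteristic zero, Maschke's theorem implies that every $\Q[H]$-module is projective, so $H_p(H;V)=0$ whenever $p>0$, for any $\Q[H]$-module $V$. Hence the spectral sequence is concentrated in the column $p=0$, and the edge map yields
\[
H_1(\Gamma;\Q) \cong H_0(H;\, \Z^n\otimes \Q) = (\Q^n)_H,
\]
where $H$ acts on $\Z^n\otimes \Q$ via conjugation inside $\Gamma$. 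By the very definition of the holonomy representation, this is $\rho_{\mathrm{hol}}\otimes\Q$.

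To finish, I would invoke two standard facts about finite group representations in characteristic zero. First, the averaging idempotent $\frac{1}{|H|}\sum_{h\in H} h$ gives a natural isomorphism $(\Q^n)_H \cong (\Q^n)^H$. Second, Maschke's theorem lets us decompose $(\Q^n,\rho_{\mathrm{hol}}\otimes\Q)$ as a direct sum of irreducible $\Q[H]$-modules, and the dimension of the invariant subspace equals the multiplicity of the trivial representation, i.e.\ the number of trivial subrepresentations. Combining these identifications with Hurewicz gives $b_1(B) = \dim_\Q(\Q^n)^H$, as claimed.

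The only point that requires a little care is verifying that the $H$-module structure on $H_1(\Z^n;\Q)$ supplied by the spectral sequence really is the conjugation action defining $\rho_{\mathrm{hol}}$; this is a standard unwinding of the Lyndon--Hochschild--Serre setup, since the quotient always acts on the abelianized kernel by conjugation, and by construction that conjugation action of $H$ on $T\cong\Z^n$ is exactly $\rho_{\mathrm{hol}}$. Everything else reduces to Maschke and the averaging trick, so I do not expect a substantial obstacle beyond this bookkeeping.
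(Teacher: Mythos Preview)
Your argument is correct. The paper does not supply its own proof of this proposition---it is quoted from \cite{Hiller1986flat} and closed with a \qed---so there is no in-paper argument to compare against; your Lyndon--Hochschild--Serre computation is a standard route to the result.
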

Many of the constructions in this paper will use flat manifolds of the form constructed in the following lemma.
\begin{lemma}\label{lem:Bp}
    For each odd prime $p$ and any dimension $n\geq p^2-1$ such that $n\equiv 0 \bmod p-1$, there exists a flat $n$-manifold $ B_p $ with $b_1(B_p)=0$ and holonomy group $H\cong (\Z_p)^{2}$.
\end{lemma}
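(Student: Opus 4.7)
The plan is to build $B_p$ as a torsion-free extension of $\Z^n$ by $H = (\Z_p)^2$ with a carefully chosen holonomy representation, starting from a minimal-dimension base case and scaling up via Proposition~\ref{prop:toral_extension}.

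First I would identify the candidate holonomy representation using the rational representation theory of $H = (\Z_p)^2$. The group $H$ has exactly $p+1$ non-trivial irreducible $\Q$-representations $\sigma_0, \dots, \sigma_p$, each of dimension $p-1$; here $\sigma_j$ is the pullback of the cyclotomic representation $\Z_p \hookrightarrow GL_{p-1}(\Z)$ on $\Z[\zeta_p]$ along the quotient $H \twoheadrightarrow H/K_j \cong \Z_p$, where $K_0, \dots, K_p$ are the $p+1$ subgroups of order $p$ in $H$. The direct sum $\rho_0 := \sigma_0 \oplus \cdots \oplus \sigma_p$ has dimension $p^2-1$, is faithful (since $\bigcap_j K_j = \{1\}$), and contains no trivial subrepresentation. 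For a general $n = (p+1)(p-1) + r(p-1)$ with $r \geq 0$, I would take as target holonomy representation $\rho := \rho_0 \oplus \sigma_{i_1} \oplus \cdots \oplus \sigma_{i_r}$ for any choice of indices.

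The central step is to realise $\rho_0$ as the rational holonomy representation of a torsion-free Bieberbach group of rank $p^2-1$. This is equivalent to producing a class $\alpha \in H^2(H; \Z^{p^2-1})$ whose restriction to every prime-order subgroup $K_j \leq H$ defines a torsion-free extension. For $i \neq j$ the map $K_j \to H/K_i$ is an isomorphism (as $K_i \cap K_j = \{1\}$), so the $\Z[K_j]$-module $\sigma_i|_{K_j}$ is isomorphic to $\Z[\zeta_p]$, which fits in the augmentation sequence $0 \to \Z[\zeta_p] \to \Z[K_j] \to \Z \to 0$. The associated long exact sequence in cohomology, combined with the vanishing $H^k(K_j; \Z[K_j]) = 0$ for $k \geq 1$ and $H^1(K_j; \Z) = 0$, yields $H^2(K_j; \sigma_i|_{K_j}) = 0$. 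The only potential obstruction to torsion-freeness along $K_j$ therefore lies in the $\sigma_j$-component, where $K_j$ acts trivially and $H^2(K_j; \sigma_j) \cong (\Z/p)^{p-1}$. I would exhibit, for each $j$, a class in $H^2(H; \sigma_j)$ whose restriction to $K_j$ is non-zero — produced via the Lyndon-Hochschild-Serre spectral sequence of $1 \to K_j \to H \to H/K_j \to 1$ — and sum them to obtain the required $\alpha$.

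For general $n$ with $(p-1) \mid n$ and $n \geq p^2-1$, I would then invoke Proposition~\ref{prop:toral_extension} on this base $(p^2-1)$-manifold with an integral representation that is a direct sum of copies of the $\sigma_j$ chosen to raise the dimension to $n$. The resulting holonomy representation still contains no trivial subrepresentation, so $b_1(B_p) = 0$ by Proposition~\ref{prop:b1}, and the holonomy group is $(\Z_p)^2$ by construction. The main obstacle throughout is the cohomological step of exhibiting the non-zero restriction classes in $H^2(H; \sigma_j)$ for each $j$; once the base $(p^2-1)$-manifold is constructed, the toral extension argument is essentially formal.
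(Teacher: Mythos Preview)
Your approach is essentially the same as the paper's: establish the base case $n=p^2-1$ (the paper simply cites Hiller--Sah \cite{Hiller1986flat}, while you sketch their cohomological construction of a torsion-free extension of $\Z^{p^2-1}$ by $(\Z_p)^2$), and then extend to larger $n$ divisible by $p-1$ via toral extensions by non-trivial $(p-1)$-dimensional irreducibles using Proposition~\ref{prop:toral_extension}. The only part you flag as an obstacle---producing classes in $H^2(H;\sigma_j)$ with non-zero restriction to $K_j$---is precisely the content of the Hiller--Sah argument, so your outline and the paper's citation amount to the same proof.
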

\begin{proof}
    The existence of such a manifold was shown for $n=p^2-1$ in \cite{Hiller1986flat}. Examples for $n>p^2-1$ can be constructed by taking toral extensions for some choice of non-trivial irreducible integral representations of $(\Z_p)^{2}$. Since there exists a $p-1$-dimensional integral representation of $\Z_p$, this gives examples in all the required dimensions.
\end{proof}

    

\subsection{Arithmetic hyperbolic manifolds}
For any rational quadratic form $q$ of signature $(n+1,1)$, there is a commensurability class of arithmetic hyperbolic $(n+1)$-manifolds obtained by considering subgroups of $O(q;\R)$ commensurable to $O(q;\Z)$. The main result of \cite{McCoySell2024} determines precisely when a given flat manifold $B$ arises as a cusp cross-section in a commensurability class of arithmetic hyperbolic manifold.
\begin{theorem}[{\cite[Theorem~1.1]{McCoySell2024}}]\label{thm:realization}
    Let $B$ be a compact flat $n$-manifold and let $q$ be a rational quadratic form of signature $(n+1,1)$. Then $B$ arises as a cusp cross-section in the commensurability class of arithmetic hyperbolic manifolds defined by $q$ if and only if $q$ is projectively equivalent to $f\oplus \langle 1,-1\rangle$ where $f$ is a holonomy form for $B$.\qed
\end{theorem}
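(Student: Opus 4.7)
The plan is to apply Theorem \ref{thm:realization}, which identifies the commensurability classes in which a flat manifold $B$ arises as a cusp cross-section with the projective equivalence classes of forms $f \oplus \langle 1,-1\rangle$, where $f$ ranges over the holonomy forms of $B$. Thus the theorem will follow once I (i) construct a suitable flat $n$-manifold $B$; (ii) produce a particular holonomy form $f$ and verify that $f \oplus \langle 1,-1\rangle$ is projectively equivalent to the given $q$; and (iii) show that \emph{every} holonomy form of $B$ yields the same projective class, i.e.\ the UCC property.

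For the construction I would assemble $B$ by iterated toral extensions (Proposition \ref{prop:toral_extension}) from a $B_p$-block supplied by Lemma \ref{lem:Bp} with holonomy $(\Z_p)^2$, together with a small auxiliary building block of bounded dimension chosen according to $n\bmod 8$ to control the residual Hasse-Witt behavior, plus trivial torus factors as padding. The dimension bound $n \geq 2p^2+2p+44$ leaves exactly enough room: a $B_p$-block wants dimension a positive multiple of $p-1$, which for roughly two irreducible $(p-1)$-dimensional pieces takes dimension at most $2(p^2-1)+2p$; an auxiliary block of bounded size at most $44$; and further padding by trivial summands to reach the exact dimension $n$.

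Next I compute invariants. By Proposition \ref{prop:form_decomposition} any holonomy form of $B$ decomposes as a direct sum along the isotypic pieces of the holonomy representation, and Proposition \ref{prop:sum_of_forms} governs discriminants and Hasse-Witt invariants under direct sum. The faithful $(\Z_p)^2$-action in the $B_p$-block forces a $p$-factor in the discriminant of each irreducible piece; counting these factors modulo rational squares, together with the residue $n\bmod 8$, yields the prime-$p$ leftover $\langle p,p\rangle$, $\langle p\rangle$, or $\langle p,p,p\rangle$ appearing in the three cases of the statement. Projective equivalence of $f\oplus \langle 1,-1\rangle$ with $q$ is then verified via the appropriate case of Proposition \ref{prop:projective_invariants}, with the auxiliary block adjusting the Hasse-Witt invariants at primes $\ell \neq p$ to match those of $q$.

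The hardest step is (iii). Since the rational holonomy representation is canonical up to $GL_n(\Q)$-conjugacy, Proposition \ref{prop:form_decomposition} forces any holonomy form to respect the isotypic decomposition, while Proposition \ref{prop:b1} lets me arrange $b_1(B)=0$ so the trivial isotype is absent and adds no freedom. What remains is to bound the projective freedom among invariant forms on each non-trivial isotype: the faithful $(\Z_p)^2$-action confines the local Hasse-Witt invariant at $p$ to a single coset, while the dimensional slack built into the construction allows the residual Hilbert-symbol variation at other primes to be absorbed into squares, once again via the appropriate case of Proposition \ref{prop:projective_invariants}. Combined with Theorem \ref{thm:realization}, this yields both the UCC property and the identification of the commensurability class with that of $q$.
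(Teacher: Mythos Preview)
Your proposal does not address the stated theorem at all. Theorem~\ref{thm:realization} is the realization criterion itself: it asserts that a flat manifold $B$ occurs as a cusp cross-section in the commensurability class of $q$ if and only if $q$ is projectively equivalent to $f\oplus\langle 1,-1\rangle$ for some holonomy form $f$ of $B$. This is a cited result from \cite{McCoySell2024} and is not proved in the present paper; it is stated with a \qed\ and used as a black box throughout. A proof of it would require the machinery of that earlier paper (embedding Bieberbach groups into arithmetic lattices, analyzing parabolic stabilizers, etc.), none of which appears in your sketch.

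What you have written is instead a sketch of Theorem~\ref{thm:3mod4examples}, which \emph{applies} Theorem~\ref{thm:realization} to specific constructions. Even as a sketch of that different result, your outline has gaps. You propose padding with ``trivial torus factors'' yet later claim to arrange $b_1(B)=0$; by Proposition~\ref{prop:b1} these are incompatible, and trivial summands introduce unconstrained diagonal entries that destroy uniqueness of the projective class. The paper's actual proof of Theorem~\ref{thm:3mod4examples} avoids this by building the manifolds from the rigid pieces $E$, $E_p$, $B_p$, and $F$ (Lemmas~\ref{lem:Emanifold}, \ref{lem:extensions_of_E}, \ref{lem:Bp}, \ref{lem:3mod4UCC}) and taking products, never introducing free torus factors. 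Your ``auxiliary building block of bounded dimension'' and the mechanism by which it ``adjusts the Hasse--Witt invariants at primes $\ell\neq p$'' are left entirely unspecified, whereas the paper's argument relies on the precise invariant computations of those lemmas.
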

It is not hard to see that two rational quadratic forms $f$ and $f'$ are projectively equivalent if and only if the forms $f\oplus \langle 1,-1\rangle$ and $f'\oplus \langle 1,-1\rangle$ are projectively equivalent.\footnote{This is because $\langle m, -m\rangle$ and $\langle 1,-1 \rangle$ are rationally equivalent for all $m\neq 0$:
\[
X^2 -Y^2=mx^2-my^2
\]
for
$X=\frac{1}{2}((1+m)x+(1-m)y)$ and $Y=\frac{1}{2}((1-m)x+(1+m)y)$}
Consequently, understanding the commensurability classes of arithmetic hyperbolic manifolds containing a given flat manifold $B$ as a cusp cross-section is equivalent to understanding its holonomy forms up to projective equivalence. In particular, showing that a flat manifold has the UCC property is equivalent to showing that it admits a unique holonomy form up to projective equivalence. Showing that two flat manifolds form a non-arithmetic pair is equivalent to showing that their sets of holonomy forms are disjoint.

\subsection{Products of flat manifolds}
Let $M$ and $N$ be two compact flat manifolds of dimensions $m$ and $n$, respectively. Using the natural isomorphism $\pi_1(M\times N)\cong \pi_1(M)\times \pi_1(N)$, we see that the holonomy group of $M\times N$ is naturally a product $H_{M\times N}=H_M\times H_N$ and that there is a holonomy representation of the form
\begin{equation}\label{eq:product_holonomy}
\rho_{M\times N}=\rho_M \oplus \rho_N,
\end{equation}
where $\rho_M$, $\rho_N$ are the representations obtained by composing projections of $H_M\times H_N$ onto $H_M$ and $H_N$ with the holonomy representations for $M$ and $N$. Applying Proposition~\ref{prop:form_decomposition} to the holonomy representation in \eqref{eq:product_holonomy}, we obtain the following.
\begin{lemma}\label{lem:product_hol1}
    Let $M$ and $N$ be compact flat manifolds. Then any holonomy form for $M\times N$ is equivalent to $g\oplus h$, where $g$ is a holonomy form for $M$ and $h$ is a holonomy form for $N$. \qed
\end{lemma}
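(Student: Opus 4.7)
The plan is to apply Proposition~\ref{prop:form_decomposition} directly to the decomposition of the holonomy representation of $M\times N$ given in \eqref{eq:product_holonomy}, with a small bookkeeping step to account for the fact that holonomy representations are only defined up to conjugation in $GL_{m+n}(\Q)$.

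First, I would let $f$ be an arbitrary holonomy form for $M\times N$ of rank $m+n$. By definition of ``holonomy form'', this means $M\times N$ admits a holonomy representation $\rho\colon H_M\times H_N\to O(f;\Q)$ in the appropriate conjugacy class. By \eqref{eq:product_holonomy}, that conjugacy class also contains a block-diagonal representative of the form $\rho_M\oplus \rho_N$, for suitable holonomy representations $\rho_M$ of $M$ and $\rho_N$ of $N$. So I would choose a matrix $C\in GL_{m+n}(\Q)$ such that $C^{-1}\rho C=\rho_M'\oplus \rho_N'$, where $\rho_M'$ and $\rho_N'$ are conjugate (hence themselves in the holonomy conjugacy classes) to $\rho_M$ and $\rho_N$ respectively.

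Let $f'$ be the form obtained from $f$ by the change of variables $C$, so that $f$ and $f'$ are rationally equivalent and the image of $C^{-1}\rho C=\rho_M'\oplus \rho_N'$ is contained in $O(f';\Q)$. Now I would apply Proposition~\ref{prop:form_decomposition} to this literal direct sum decomposition to get a rational equivalence $f'\cong g\oplus h$ with $\rho_M'\to O(g;\Q)$ and $\rho_N'\to O(h;\Q)$. By definition, $g$ is then a holonomy form for $M$ and $h$ is a holonomy form for $N$, and since $f$ is equivalent to $f'$, we conclude $f\cong g\oplus h$.

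The only genuine subtlety — and the only step that requires any thought beyond quoting Proposition~\ref{prop:form_decomposition} — is the conjugation by $C$. Without this step, the representation into $O(f;\Q)$ guaranteed by the definition of ``holonomy form'' is only guaranteed to be conjugate to $\rho_M\oplus \rho_N$, not literally a direct sum, so the hypothesis of Proposition~\ref{prop:form_decomposition} wouldn't immediately apply. Once one notices that conjugating the representation changes $f$ only by rational equivalence, the lemma reduces to a direct invocation of Proposition~\ref{prop:form_decomposition}.
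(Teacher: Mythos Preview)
Your proposal is correct and is essentially the same approach as the paper's: apply Proposition~\ref{prop:form_decomposition} to the decomposition \eqref{eq:product_holonomy}. The paper treats this as a one-line deduction; you have simply spelled out the bookkeeping about conjugating into a literal block-diagonal form and tracking the induced rational equivalence on $f$, which is exactly the implicit content of that one line.
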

Furthermore, we will need the following refined statement.
\begin{lemma}\label{lem:product_hol_refined}
    Let $M$ and $N$ be compact flat manifolds, where $b_1(M)=0$. Let $f$ be a holonomy form for $M\times N$ such that a holonomy representation of the form given in \eqref{eq:product_holonomy} has image in $O(f;\Q)$. Then $f=g\oplus h$, where $g$ is a holonomy form for $M$ and $h$ is a holonomy form for $N$.
\end{lemma}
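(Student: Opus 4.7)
The plan is to show that the two summands $\Q^m$ and $\Q^n$ in the block decomposition $\Q^{m+n}=\Q^m\oplus\Q^n$ coming from \eqref{eq:product_holonomy} are orthogonal with respect to the symmetric bilinear form associated to $f$. Once this orthogonality is established, $f$ splits as $f=g\oplus h$, where $g$ and $h$ are the restrictions of $f$ to $\Q^m$ and $\Q^n$. Each restriction is automatically positive definite, and the block form of the representation shows that $g$ is invariant under $\rho_M$ (take $h_N=1$ in the given orthogonal representation) and $h$ is invariant under $\rho_N$. So $g$ and $h$ will be holonomy forms for $M$ and $N$ respectively, as required.

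The orthogonality step is where the hypothesis $b_1(M)=0$ enters, and this is the one place the argument has any content. By Proposition~\ref{prop:b1}, the hypothesis is equivalent to $\rho_M$ containing no trivial subrepresentation, i.e.\ $(\Q^m)^{H_M}=0$. I would invoke the standard averaging operator
\[
\pi_M=\frac{1}{|H_M|}\sum_{h\in H_M}\rho_M(h),
\]
which projects $\Q^m$ onto $(\Q^m)^{H_M}$ and is therefore zero by hypothesis. Given $v\in\Q^m$ and $w\in\Q^n$, invariance of the bilinear form $B$ under the element $(h,1)\in H_M\times H_N$ (which acts as $\rho_M(h)$ on the first block and as the identity on the second) gives $B(\rho_M(h)v,w)=B(v,w)$ for every $h\in H_M$. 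Averaging over $H_M$ yields $B(v,w)=B(\pi_M v,w)=0$, establishing $\Q^m\perp\Q^n$.

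The main obstacle is conceptual rather than technical: one has to notice that the refined statement is really about the specific subspaces $\Q^m$ and $\Q^n$ being $B$-orthogonal, and not merely about $f$ admitting some orthogonal splitting into holonomy forms (that weaker statement is already Lemma~\ref{lem:product_hol1}). The hypothesis $b_1(M)=0$ is precisely what is needed to make the averaging argument kill the mixed pairing; without it, the diagonal subrepresentation given by $H_M\to H_M\times H_N$, $h\mapsto(h,1)$ could have nonzero trivial isotypic component, and the argument would break down. Beyond Proposition~\ref{prop:b1} and Maschke-style averaging over the finite group $H_M$, no additional machinery is needed.
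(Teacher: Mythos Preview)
Your proof is correct. It differs from the paper's in method, though both are short. The paper argues that, as representations of $H_M\times H_N$, the summands $\rho_M$ and $\rho_N$ can share an irreducible constituent only if both contain the trivial representation; since $b_1(M)=0$ rules this out, Schur's lemma (via \cite[Lemma~6.3]{McCoySell2024}) forces the two blocks to be $f$-orthogonal. You instead bypass the analysis of irreducibles of the product group and the appeal to Schur by computing directly: the averaging projector $\pi_M$ vanishes because $(\Q^m)^{H_M}=0$, and invariance of $B$ under $H_M\times\{1\}$ collapses the mixed pairing. Your route is more self-contained and elementary (no external lemma, no decomposition into irreducibles), while the paper's route situates the result as an instance of the general principle already used in Proposition~\ref{prop:form_decomposition}, namely that disjoint isotypic components are automatically orthogonal for any invariant form.
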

\begin{proof}
    As representations of $H_M\times H_N$, the only way that the representations $\rho_M$ and $\rho_N$ defined in \eqref{eq:product_holonomy} can have a subrepresentation in common is if the holonomy representations for $M$ and $N$ both contain a trivial subrepresentation. As $b_1(M)=0$, Proposition~\ref{prop:b1} implies that $\rho_M$ does not contain any trivial subrepresentations. Thus $\rho_M$ and $\rho_N$ do not share any isomorphic subrepresentations. Using Schur's lemma (cf. \cite[Lemma~6.3]{McCoySell2024}), this implies if $\rho_M \oplus \rho_N$ has image in $O(f;\Q)$, then $f$ takes the form $f=g\oplus h$ where $\rho_M$ and $\rho_N$ have image in $O(g;\Q)$ and $O(h;\Q)$ respectively.
\end{proof}

\section{Manifolds with abelian \texorpdfstring{$p$}{p}-group holonomy}
We now take a brief interlude to understand the representations of cyclic groups. Amongst other useful applications this will provide several examples of manifolds with the UCC property in dimensions $n\equiv 2\bmod 4$.

The cyclic group $\Z_n$ admits a unique conjugacy class of faithful irreducible rational representations. We will denote this representation by 
\[
\sigma_n: \Z_n \rightarrow GL_{r}(\Q),
\]
where the dimension satisfies $r=\varphi(n)$, where $\varphi$ denotes the Euler totient function. Over $\C$ the representation $\sigma_n$ decomposes as the sum of the $\varphi(n)$ distinct faithful 1-dimensional representations of $\Z_n$.

\begin{prop}\label{prop:cyclic_rep_calc}
    Let $f$ be a positive definite quadratic form of rank $r=\varphi(n)$ such that $O(f;\Q)$ contains the image of the representation $\sigma_n$ for some $n>2$. Then
    \[
    d(f)=\begin{cases}
        q &\text{if $n=q^a$ or $n=2q^a$ for $a\geq 1$ and $q$ an odd prime}\\
        1 &\text{otherwise.}
    \end{cases}
    \]
    Suppose that there exists an integer $m\geq 1$ such that $-m$ is a square in both the cyclotomic field $\Q[\zeta_n]$ and the field $\Q_p$. Then over $\Q_p$ the form $f$ is equivalent to the diagonal form
    \[\langle \underbrace{1,\dots, 1}_{r/2}, \underbrace{-1,\dots, -1}_{r/2}\rangle.
    \]
\end{prop}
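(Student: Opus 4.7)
The plan is to realize $\sigma_n$ as multiplication by $\zeta_n$ on the cyclotomic field $K = \Q[\zeta_n]$, viewed as an $r$-dimensional $\Q$-vector space, and then to classify the invariant symmetric bilinear forms on this module. Let $K^+ = \Q[\zeta_n + \zeta_n^{-1}]$ and let $x \mapsto \bar x$ denote the unique nontrivial automorphism of $K$ fixing $K^+$. For each $\alpha \in K^+$ the pairing $f_\alpha(x, y) = \mathrm{Tr}_{K/\Q}(\alpha x \bar y)$ is symmetric and $\sigma_n$-invariant (using $\zeta_n \bar\zeta_n = 1$). A Schur-type argument shows that the space of invariant symmetric bilinear forms has dimension $[K^+:\Q] = r/2$, so every such form equals some $f_\alpha$, with positive definite forms corresponding to totally positive $\alpha$.

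To compute $d(f)$, I would first reduce to $\alpha = 1$. Since $\bar \alpha = \alpha$, one has $f_\alpha(x, y) = f_1(\alpha x, y)$, so the Gram matrices of $f_\alpha$ and $f_1$ differ by the multiplication-by-$\alpha$ matrix, whose determinant is $N_{K/\Q}(\alpha) = N_{K^+/\Q}(\alpha)^2$, a rational square. Thus $d(f) \equiv d(f_1) \pmod{(\Q^\times)^2}$. For $d(f_1)$, I would use the basis $\{1, \zeta_n, \ldots, \zeta_n^{r-1}\}$, in which the Gram matrix is $T = (\mathrm{Tr}(\zeta_n^{i-j}))$; this factors as $M^T \bar M$, where $M$ is the Vandermonde matrix in the primitive $n$th roots of unity. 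Since complex conjugation permutes these roots as a fixed-point-free involution (a product of $r/2$ transpositions), $\det T = |\det M|^2 = (-1)^{r/2}\mathrm{disc}(\Phi_n)$, and the classical cyclotomic discriminant formula then yields
\[
d(f_1) = \frac{n^r}{\prod_{p \mid n} p^{r/(p-1)}}.
\]
Modulo squares, $n^r$ is trivial (as $r$ is even for $n > 2$), so the expression reduces to $\prod_{p \mid n} p^{r v_p(n) - r/(p-1)}$. A case-by-case parity analysis of these exponents shows that an odd prime $p$ contributes a factor of $p$ precisely when $n \in \{p^a, 2p^a\}$, while the prime $2$ never contributes, giving the asserted formula.

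For the $\Q_p$-assertion, I would use the existence of $\eta \in K$ with $\eta^2 = -m$ to decompose $K = K^+ \oplus K^+ \eta$; necessarily $\bar \eta = -\eta$, since $\eta^2 = -m < 0$ forces $\eta$ to be purely imaginary in every complex embedding. Expanding $f_\alpha(x, y)$ for $x = a + b\eta$ and $y = c + d\eta$ with $a, b, c, d \in K^+$, and using that $\mathrm{Tr}_{K/\Q}$ annihilates $K^+ \eta$, one obtains
\[
f_\alpha(x, y) = 2\,\mathrm{Tr}_{K^+/\Q}(\alpha ac) + 2m\, \mathrm{Tr}_{K^+/\Q}(\alpha bd),
\]
so $f \cong h \oplus m h$, where $h$ is the rank-$r/2$ form $u \mapsto 2\, \mathrm{Tr}_{K^+/\Q}(\alpha u^2)$ on $K^+$. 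The hypothesis $-m \in (\Q_p^\times)^2$ gives $m \equiv -1 \pmod{(\Q_p^\times)^2}$, so $mh \cong -h$ over $\Q_p$. Hence $f \otimes \Q_p \cong h \oplus (-h)$ is a sum of $r/2$ hyperbolic planes, equivalent to the diagonal form $\langle 1, \ldots, 1, -1, \ldots, -1\rangle$.

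The main technical obstacle is the parity analysis of the exponents $r v_p(n) - r/(p-1)$ appearing in the middle paragraph; the remaining ingredients---identifying the representation with $K$, the Schur count of invariant forms, and the hyperbolic-plane decomposition over $\Q_p$---are routine once the algebraic framework is in place.
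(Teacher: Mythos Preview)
Your argument is correct and self-contained, but it proceeds quite differently from the paper. For the discriminant, the paper does not compute the trace form directly; instead it cites prior work for the base cases $n$ an odd prime and $n=4$, and then for general $n$ restricts $\sigma_n$ to a cyclic subgroup $\Z_k$ (with $k$ an odd prime dividing $n$, or $k=4$), observes that the restriction is a sum of $\ell=\varphi(n)/\varphi(k)$ copies of $\sigma_k$, and applies Proposition~\ref{prop:form_decomposition} to get $d(f)=d(\sigma_k)^{\ell}$; the parity of $\ell$ then does the same job as your parity analysis of $r/(p-1)$. For the $\Q_p$ statement, the paper argues representation-theoretically: since $\Q[\sqrt{-m}]\subset\Q[\zeta_n]$, the representation $\sigma_n$ splits over $\Q[\sqrt{-m}]$ into two conjugate (hence complex-type) irreducibles, and a cited lemma then forces the form to be hyperbolic over any field containing $\sqrt{-m}$, in particular over $\Q_p$. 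Your route---realizing $f$ as $\mathrm{Tr}_{K/\Q}(\alpha x\bar y)$, reducing to $\alpha=1$ via $N_{K/\Q}(\alpha)=N_{K^+/\Q}(\alpha)^2$, invoking the cyclotomic discriminant formula, and splitting $K=K^+\oplus K^+\eta$ to obtain $f\cong h\oplus mh$---is more explicit and avoids the external citations, at the cost of the discriminant-formula bookkeeping you flag. Both the paper's restriction-to-subgroups trick and your trace-form realization ultimately hinge on the same arithmetic fact (the parity of $\varphi(n)/(p-1)$), just packaged differently.
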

\begin{proof}
For $n$ an odd prime or $n=4$, then the fact about the discriminant was established in \cite[Proposition~7.10 and Lemma~7.6]{McCoySell2024}. Thus suppose that $k$ is an integer dividing $n$ such that $k=p$ is either an odd prime or $k=4$. If we consider the restriction of $\sigma_n$ to $\Z_k\leq \Z_n$, then we see that the restricted representation decomposes as sum of $\ell=m/\varphi(k)$ copies of $\sigma_k$. By Proposition~\ref{prop:form_decomposition}, this implies that $f$ is equivalent to a sum
\[
f\cong f_1 \oplus \dots \oplus f_\ell
\]
where $f_i$ is a form containing the image of $\sigma_k$. If $k=4$ or $\ell$ is even, this implies that $d(f)=1$. If $k=p$ is an odd prime,then $d(f)=p$ if and only if $\ell$ is odd. This happens if and only if $n=p^a$ or $n=2p^a$.

Now we turn to the Hasse-Witt calculations. If $-m$ is a square in $\Q[\zeta_n]$, then $\Q[\zeta_n]$ contains the imaginary quadratic field $\Q[\sqrt{-m}]$. Over $\Q[\sqrt{-m}]$ the representation $\sigma_n$ splits into two irreducible representations, which are necessarily complex. By \cite[Lemma~6.8]{McCoySell2024}, this implies that over $\Q[\sqrt{-m}]$ the form $f$ is equivalent to the diagonal form
\[
f\cong \langle \underbrace{1,\dots, 1}_{r/2}, \underbrace{-1,\dots, -1}_{r/2}\rangle.
\]
Consequently, we have the same equivalence over any extension of $\Q$ containing a copy of $\Q[\sqrt{-m}]$.
\end{proof}

The Galois theory of cyclotomic fields implies that for an odd prime $p$ the cyclotomic field $\Q[\zeta_p]$ contains a unique quadratic extension of $\Q$. For $p\equiv 3\bmod 4$, this quadratic subfield of $\Q[\zeta_p]$ is $\Q[\sqrt{-p}]$. This allows us to analyze the holonomy forms of manifolds whose holonomy group is an abelian $p$-group for $p\equiv 3 \bmod 4$.

\begin{lemma}\label{lem:modpholonomy}
    Let $p$ be a prime $p\equiv 3\bmod 4$. Let $B$ be a flat $n$-manifold such that $b_1(B)=0$ and its holonomy group is an abelian $p$-group. If $f$ is a holonomy form for $B$, then
    \[
    d(f)=\begin{cases}
        1 &n\equiv 0 \bmod 4\\
        p &n\equiv 2\bmod 4
    \end{cases}
    \]
    and for all primes $q$ such that $-p$ is a square in $\Q_q$ we have 
    \[\varepsilon_q(f)=\begin{cases}
        1 & \text{$q> 2$}\\
        (-1)^{\frac{n(n-2)}{8}}& \text{$q=2$.}
    \end{cases}\]
\end{lemma}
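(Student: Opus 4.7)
The plan is to reduce to cyclic holonomy by decomposing $\rho_{\mathrm{hol}}$ into irreducible rational subrepresentations, apply Proposition~\ref{prop:form_decomposition} to decompose $f$ accordingly, and then invoke Proposition~\ref{prop:cyclic_rep_calc} on each summand.

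First I would observe that any irreducible rational representation of a finite abelian group factors through a cyclic quotient: Galois orbits of characters have the same kernel as a single character in the orbit, so the image of such a representation is cyclic and the induced representation on $H/\ker$ is faithful. Since only cyclic abelian groups admit faithful irreducible rational representations, and since $H$ is an abelian $p$-group, every nontrivial irreducible rational subrepresentation of $\rho_{\mathrm{hol}}$ is isomorphic to $\sigma_{p^{a}}$ pulled back along some surjection $H \twoheadrightarrow \Z_{p^a}$ with $a\geq 1$. The assumption $b_1(B)=0$ together with Proposition~\ref{prop:b1} rules out trivial summands, so
\[
\rho_{\mathrm{hol}} \cong \sigma_{p^{a_1}} \oplus \dots \oplus \sigma_{p^{a_k}}, \qquad a_i\geq 1.
\]

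For the discriminant, Proposition~\ref{prop:form_decomposition} gives $f\cong f_1\oplus\dots\oplus f_k$ with each $f_i$ of rank $r_i=\varphi(p^{a_i})=(p-1)p^{a_i-1}$ containing the image of $\sigma_{p^{a_i}}$. Since $p^{a_i}$ is a power of an odd prime, Proposition~\ref{prop:cyclic_rep_calc} gives $d(f_i)=p$, so Proposition~\ref{prop:sum_of_forms} yields $d(f)=p^{k}\in\Qsquare$. Because $p\equiv 3\bmod 4$, each $r_i\equiv 2\bmod 4$, so $n\equiv 2k\bmod 4$. Hence $k$ is even precisely when $n\equiv 0\bmod 4$ and odd precisely when $n\equiv 2\bmod 4$, yielding the stated formula.

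For the Hasse-Witt invariants, the hypothesis $p\equiv 3\bmod 4$ gives $\Q[\sqrt{-p}]\subset\Q[\zeta_p]\subset\Q[\zeta_{p^{a_i}}]$, so $-p$ is a square in every $\Q[\zeta_{p^{a_i}}]$. For a prime $q$ with $-p$ a square in $\Q_q$, Proposition~\ref{prop:cyclic_rep_calc} shows each $f_i$ is equivalent over $\Q_q$ to $\langle 1,\dots,1,-1,\dots,-1\rangle$ with $r_i/2$ of each sign. Summing, $f$ is equivalent over $\Q_q$ to $\langle 1,\dots,1,-1,\dots,-1\rangle$ with $n/2$ of each sign. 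Computing $\varepsilon_q$ from the diagonal formula, only the pairs $(-1,-1)$ can contribute. Since $(-1,-1)_q=1$ for all odd $q$ (every unit in $\Z_q$ is a sum of two squares) while $(-1,-1)_2=-1$, and since there are $\binom{n/2}{2}=n(n-2)/8$ such pairs, we obtain $\varepsilon_q(f)=1$ for odd $q$ and $\varepsilon_2(f)=(-1)^{n(n-2)/8}$.

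The main obstacle is simply the bookkeeping of Step~1: verifying that the $p$-group hypothesis together with $b_1(B)=0$ forces every irreducible rational summand of $\rho_{\mathrm{hol}}$ to be one of the $\sigma_{p^{a}}$'s so that Proposition~\ref{prop:cyclic_rep_calc} applies summand-by-summand. Once this is in place, both the discriminant and Hasse-Witt computations reduce to straightforward Hilbert-symbol arithmetic using the hypothesis on $p\bmod 4$.
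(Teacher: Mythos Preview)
Your argument is correct and follows essentially the same route as the paper: decompose $\rho_{\mathrm{hol}}$ into irreducible rational subrepresentations, each a pullback of some $\sigma_{p^a}$ (using $b_1(B)=0$ to exclude the trivial summand), apply Proposition~\ref{prop:form_decomposition} and Proposition~\ref{prop:cyclic_rep_calc} to each piece, and then read off the discriminant from the parity of the number of summands and the Hasse--Witt invariants from the split form $\langle 1,\dots,1,-1,\dots,-1\rangle$ over $\Q_q$. The only cosmetic difference is that you spell out the Hilbert-symbol count $\binom{n/2}{2}=n(n-2)/8$ explicitly, whereas the paper writes the answer as $(-1,-1)_q^{n(n-2)/8}$ directly.
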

\begin{proof}
    Let $\rho$ be the holonomy representation for $B$. Since we are supposing that $b_1(B)=0$, Proposition~\ref{prop:b1} implies that $\rho$ contains no trivial subrepresentations. Every rational irreducible representation for an abelian group factors through a representation of a cyclic group, necessarily of order $p^r$ for some $r\geq 1$. Consequently, if we decompose $\rho$ into irreducible representations over $\Q$, we have a decomposition
    \[
    \rho\cong \rho_1 \oplus \dots \oplus \rho_\ell
    \]
    where each $\rho_i$ is a representation of a cyclic group of order a power of $p$. Each $\rho_i$ has dimension $p^{r_i-1}(p-1)$ for some $r_i\geq 1$. Since $p\equiv 3\mod 4$, we have that $p^{r_i-1}(p-1)\equiv 2 \bmod 4$ for each $i$. So $n \equiv 2\ell\bmod 4$. By Proposition~\ref{prop:form_decomposition}, any holonomy form $f$ for $B$ is equivalent to
    \[
    f\cong f_1\oplus \dots \oplus f_\ell,
    \]
    where each $f_i$ is a form such that the image of $\rho_i$ is contained in $O(f_i;\Q)$. By Proposition~\ref{prop:cyclic_rep_calc}, we have $d(f_i)=p$. Therefore $d(f)=p$ if $\ell$ is odd and $d(f)=1$ if $\ell$ is even. Let $q$ be a prime such that $-p$ is a square in $\Q_q$. Since $-p$ is a square on $\Q[\zeta_p]$ and hence also in $\Q[\zeta_{p^r}]$ for any $r\geq 1$, Proposition~\ref{prop:cyclic_rep_calc} implies that $f$ is equivalent to
    \[
    f\cong \langle \underbrace{1,\dots,1}_{n/2}, \underbrace{-1,\dots,-1}_{n/2} \rangle,
    \]
    over $\Q_q$. Therefore,
    \[
    \varepsilon_q(f)=(-1,-1)_q^{\frac{n(n-2)}{8}}=\begin{cases}
        1 & \text{$q> 2$}\\
        (-1)^{\frac{n(n-2)}{8}}& \text{$q=2$.}
        \end{cases}
    \]
    \end{proof}
    
Consequently, we obtain a natural class of manifolds with the UCC property in dimensions $n\equiv 2\bmod 4$. In particular, some of the manifolds of the form given by Lemma~\ref{lem:Bp} turn out to have the UCC property.
\begin{theorem}
\label{thm:p3mod4UCCexamples}
Let $B$ be a flat $n$-manifold such that $b_1(B)=0$ with holonomy group an abelian $p$-group for $p\equiv 3\bmod 4$ and $n\equiv 2\bmod 4$. Then $B$ has the UCC property and appears as cusp cross-section of the commensurability class of arithmetic hyperbolic manifolds defined by the quadratic form
    \[
    q(x_1,\dots,x_{n+2})=\begin{cases}
    px_1^2 +x_2^2 +\dots +x_{n+1}^2-x_{n+1}^2 &n\equiv 2\bmod 8\\
        px_1^2 +px_2^2 +px_3^2+x_4^2+\dots +x_{n+1}^2-x_{n+1}^2 &n\equiv 6\bmod 8
    \end{cases}
    \]
\end{theorem}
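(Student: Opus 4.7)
The plan is to reduce everything to comparing invariants via Proposition~\ref{prop:projective_invariants}(iii), using the invariant calculations already packaged in Lemma~\ref{lem:modpholonomy}. By Theorem~\ref{thm:realization}, $B$ has the UCC property exactly when its holonomy forms all lie in a single projective equivalence class, so the main task is to show (a) any two holonomy forms of $B$ are projectively equivalent, and (b) that class is realized by $f_0\oplus\langle 1,-1\rangle=q$ for an explicit positive definite form $f_0$ of rank $n$.

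For (a), since $n\equiv 2\bmod 4$, Lemma~\ref{lem:modpholonomy} gives $d(f)=p$ for every holonomy form $f$ of $B$, and pins down $\varepsilon_q(f)$ at every prime $q$ where $-p$ is a square in $\Q_q$: namely $\varepsilon_q(f)=1$ for odd $q$, and $\varepsilon_2(f)=(-1)^{n(n-2)/8}$. A short modular arithmetic computation shows $n(n-2)/8$ is even when $n\equiv 2\bmod 8$ and odd when $n\equiv 6\bmod 8$, so the invariants split into two cases. Since $n\equiv 2\bmod 4$ forces $n\equiv 2\bmod 4$ when viewing rank-$n$ forms, Proposition~\ref{prop:projective_invariants}(iii) says the projective class of a positive definite form of this rank is determined by the discriminant together with the Hasse--Witt invariants at primes where $-d$ is a square; these are precisely the invariants Lemma~\ref{lem:modpholonomy} controls. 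Hence any two holonomy forms of $B$ are projectively equivalent, proving the UCC property.

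For (b), I would pick the candidate positive definite form
\[
f_0=\begin{cases}\langle p,1,1,\dots,1\rangle & n\equiv 2\bmod 8,\\ \langle p,p,p,1,1,\dots,1\rangle & n\equiv 6\bmod 8,\end{cases}
\]
of rank $n$, so that $f_0\oplus\langle 1,-1\rangle$ agrees verbatim with the form $q$ in the statement. In both cases $d(f_0)=p$ since $p^3\equiv p$ modulo squares. For the Hasse--Witt invariant, Proposition~\ref{prop:sum_of_forms} together with the expansion $\varepsilon_q(\langle a_1,\dots,a_n\rangle)=\prod_{i<j}(a_i,a_j)_q$ reduces the calculation to Hilbert symbols in $p$ and $-1$. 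In the $n\equiv 2\bmod 8$ case $\varepsilon_q(f_0)=1$ for all $q$. In the $n\equiv 6\bmod 8$ case the only surviving contributions come from the three $p$--$p$ pairs, so $\varepsilon_q(f_0)=(p,p)_q=(p,-1)_q$, which vanishes for odd $q\neq p$ and equals $-1$ at $q=2$ because $p\equiv 3\bmod 4$. Matching these against $\varepsilon_q(f)$ from Lemma~\ref{lem:modpholonomy} at the primes where $-p$ is a square in $\Q_q$ confirms $f_0$ and $f$ have the same invariants, hence are projectively equivalent by Proposition~\ref{prop:projective_invariants}(iii), and so $q=f_0\oplus\langle 1,-1\rangle$ defines the commensurability class of $B$.

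The main obstacle is the bookkeeping in the $n\equiv 6\bmod 8$ case: one must both verify $(-1)^{n(n-2)/8}=-1$ and check that this sign is correctly reproduced by three copies of $p$, while simultaneously confirming that the primes where $-p$ is not a square in $\Q_q$ (notably $q=p$, and $q=2$ when $p\equiv 3\bmod 8$) are exactly the ones we are permitted to ignore under Proposition~\ref{prop:projective_invariants}(iii). The rest of the argument is routine once the invariants are lined up.
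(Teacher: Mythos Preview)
Your proposal is correct and follows essentially the same approach as the paper: invoke Lemma~\ref{lem:modpholonomy} to pin down $d(f)$ and the relevant $\varepsilon_q(f)$, apply Proposition~\ref{prop:projective_invariants}(iii) to conclude the UCC property, and then verify that the explicit diagonal form $f_0$ lands in the same projective class. Your write-up is in fact more explicit than the paper's, which simply asserts that the verification goes through; your Hilbert-symbol bookkeeping (including the parity of $n(n-2)/8$ and the case split on whether $-p$ is a square in $\Q_2$) is accurate and fills in exactly what the paper leaves to the reader.
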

\begin{proof}
    It follows from 
    Lemma~\ref{lem:modpholonomy} that any holonomy form for $B$ satisfies $d(f)=p$ and for any prime $q$ such that $-p$ is a square in $\Q_q$ we have
    \[\varepsilon_q(f)=\begin{cases}
        1 & \text{$q\neq 2$ or $n\equiv 2\bmod 8$}\\
        -1& \text{$q=2$ and $n\equiv 6\bmod 8$.}
    \end{cases}.\]
Since $n\equiv 2\bmod 4$, Proposition~\ref{prop:projective_invariants} shows that every holonomy form is contained in a unique projective equivalence class.  This implies that $B$ has the UCC property and we verify that  
\[
    f(x_1,\dots,x_{n})=\begin{cases}
    px_1^2 +x_2^2 +\dots +x_{n}^2 &n\equiv 2\bmod 8\\
        px_1^2 +px_2^2 +px_3^2+x_4^2+\dots +x_{n}^2 &n\equiv 6\bmod 8
    \end{cases}
    \]
    is a representative of this projective class.
\end{proof}

\section{The UCC property in dimensions \texorpdfstring{$n\equiv 2\bmod 4$}{n=2 mod 4}}
In this section, we construct a family of manifolds with the UCC in dimensions $n\equiv 2\bmod 4$. This includes an example of a (non-orientable) 6-dimensional manifold with the UCC property. First, we take toral extensions of the Hantzsche–Wendt manifold in order to obtain various flat manifolds with $b_1(B)=0$ and $H\cong (\Z_2)^2$.
\begin{lemma}\label{lem:HW_extension}
    If $n\geq 4$, then there exists a non-orientable flat $n$-manifold $B$ with $b_1(B)=0$ and $H\cong (\Z_2)^2$. If $n=3$ or $n\geq 5$, then there exists an orientable flat $n$-manifold $B$ with $b_1(B)=0$ and $H\cong (\Z_2)^2$.
\end{lemma}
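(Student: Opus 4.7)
The plan is to take the Hantzsche--Wendt manifold $HW$, which is a $3$-dimensional orientable flat manifold with $b_1(HW)=0$ and holonomy group $(\Z_2)^2$, and produce higher-dimensional examples by iterated toral extensions (Proposition~\ref{prop:toral_extension}) along $1$-dimensional integral characters of $(\Z_2)^2$. A standard holonomy representation for $HW$ sends the two generators of $(\Z_2)^2$ to the diagonal matrices $\mathrm{diag}(1,-1,-1)$ and $\mathrm{diag}(-1,1,-1)$, so each non-trivial element acts with determinant $+1$ and the representation fixes no non-zero vector. In particular $HW$ itself handles the case $n=3$.

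Now label the three non-trivial $1$-dimensional rational characters of $(\Z_2)^2$ as $\chi_1,\chi_2,\chi_3$. Given non-negative integers $a_1,a_2,a_3$ with $m=a_1+a_2+a_3$, Proposition~\ref{prop:toral_extension} produces a flat $(3+m)$-manifold $B$ whose holonomy representation is
\[
\rho_{HW}\oplus \chi_1^{\oplus a_1}\oplus \chi_2^{\oplus a_2}\oplus \chi_3^{\oplus a_3}.
\]
Since $\rho_{HW}$ is already faithful, the holonomy group of $B$ is $(\Z_2)^2$; since each $\chi_i$ is non-trivial and $\rho_{HW}$ has no trivial subrepresentation, the direct sum has no trivial subrepresentation, so Proposition~\ref{prop:b1} gives $b_1(B)=0$. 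The only remaining question is orientability.

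Orientability holds iff every element of the image lies in $SL_{3+m}(\Z)$. Since $\rho_{HW}$ already has determinant $+1$ everywhere, this reduces to requiring that $\chi_1^{a_1}\chi_2^{a_2}\chi_3^{a_3}$ be the trivial character on $(\Z_2)^2$. A direct evaluation on the three non-trivial elements shows this is equivalent to the parities $a_1,a_2,a_3$ all being equal. For orientable examples with $n=3+m\geq 5$, pick $(a_1,a_2,a_3)=(m,0,0)$ when $m$ is even and $(a_1,a_2,a_3)=(m-2,1,1)$ when $m\geq 3$ is odd; both satisfy the parity condition and cover every $m\geq 2$. For non-orientable examples with $n=3+m\geq 4$, it suffices to choose $(a_1,a_2,a_3)$ with unequal parities: take $(1,0,0)$ when $m=1$, and for $m\geq 2$ take $(m-1,1,0)$, which has mismatched parities.

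There is no serious obstacle: the entire argument is an exercise in bookkeeping with $1$-dimensional characters, and the only point that requires any attention is the parity criterion for orientability of a toral extension, which is immediate from the fact that the determinant of the extension splits as a product of the determinants of the summands.
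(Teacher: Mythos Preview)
Your proof is correct and follows essentially the same approach as the paper: start from the Hantzsche--Wendt manifold and perform toral extensions by the non-trivial one-dimensional characters of $(\Z_2)^2$, then read off $b_1=0$ from the absence of trivial summands and orientability from the parity of the multiplicities. The only cosmetic differences are that the paper parametrizes directly by the total multiplicities $(a_1,a_2,a_3)$ with each $a_i\geq 1$ (absorbing the three characters already present in $\rho_{HW}$), and makes slightly different explicit choices of triples.
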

\begin{proof}
    Let $B_{HW}$ denote the Hantzsche–Wendt manifold, which is the unique orientable 3-dimensional flat manifold with $b_1(B_{HW})=0$ and $H\cong (\Z_2)^2$. The holonomy representation of $B_{HW}$ can be decomposed as $\rho \cong \rho_1 \oplus \rho_2 \oplus \rho_3$, where the $\rho_i$ are the three distinct non-trivial irreducible representations of  $(\Z /2)^2$. By performing toral extensions as in Proposition~\ref{prop:toral_extension} we can obtain, for any triple of integers $a_1, a_2, a_3\geq 1$, a flat manifold $B$ with $H\cong (\Z_2)^2$, $b_1(B)=0$ and holonomy representation $\rho_1^{\oplus a_1} \oplus \rho_2^{\oplus a_2} \oplus \rho_3^{\oplus a_3}$. This manifold has dimension $a_1+a_2+a_3$ and is orientable if and only if $a_1+a_2$, $a_1+a_3$ and $a_2+a_3$ are all even. Equivalently, $B$ is orientable if and only if $a_1, a_2$ and $a_3$ all have the same parity.
    For $n\geq 5$, taking
    \[
    (a_1,a_2,a_3)=\begin{cases}
        (n-2,1,1) &\text{$n$ odd}\\
        (n-4,2,2) &\text{$n$ even},
    \end{cases}
    \]
    yields an orientable manifold.
    For $n\geq 4$, taking $(a_1,a_2,a_3)=(n-3,2,1)$ yields a non-orientable manifold.
\end{proof}
It follows from Proposition~\ref{prop:projective_invariants}, we see also that manifolds constructed in the following lemma have the UCC property whenever $n\equiv 2 \bmod 4$. They will also be used in the construction of even-dimensional non-arithmetic pairs.
\begin{lemma}\label{lem:C2}
    For every even $n\geq 6$, there exists a flat $n$-manifold $C$ such that
    \begin{enumerate}[(i)]
    \item\label{it:C2_holforms} Every holonomy form $f$ for $C$ satisfies $d(f)=1$ and $\varepsilon_q(f)=1$ for all primes such that $-1$ is a square in $\Q_q$.
    \item\label{it:C2_orientability} $C$ is orientable if $n\geq 10$ and non-orientable if $n=6,8$.
    \end{enumerate}
\end{lemma}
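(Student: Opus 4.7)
The strategy is to construct $C$ so that its rational holonomy representation decomposes into irreducible summands, each of which forces its invariant positive definite rational forms to have discriminant $1$ and trivial Hasse--Witt invariant at all primes $q$ with $-1\in(\Q_q^\times)^2$. Given such a representation, any holonomy form $f\cong f_1\oplus\cdots\oplus f_\ell$ automatically satisfies (i): by Proposition~\ref{prop:sum_of_forms}, $d(f)=\prod d(f_i)=1$, and the cross-term contributions $(d(f_i),d(f_j))_q$ in the Hasse--Witt sum formula all vanish because each $d(f_i)=1$.

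Candidate building blocks are irreducible rational representations whose invariant forms are rigid. Proposition~\ref{prop:cyclic_rep_calc} gives the faithful representation $\sigma_n$ of $\Z_n$ for $4\mid n$ and $n$ neither $q^a$ nor $2q^a$ for any odd prime $q$ (e.g., $n=8,12$): $d=1$ and, since $i\in\Q[\zeta_n]$, taking $m=1$ yields $\varepsilon_q=1$ at $q\equiv 1\bmod 4$. A second family is the faithful $2$-dimensional representation of the dihedral group $D_4$, whose invariant positive definite rational forms are scalar multiples of $\langle 1,1\rangle$, giving $d=1$ and $\varepsilon_q=(c,-1)_q=1$ at such $q$. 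A third family comes from the standard $3$-dimensional representation of $S_4$ (or its tensor with the sign character), whose unique invariant form up to scalar is equivalent to $\langle 1,1,1\rangle$, with $d=1$ and $\varepsilon_q=1$ at all $q$.

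I construct a base Bieberbach group $C_0$ in each of the low dimensions $n=6$ and $n=8$, choosing the holonomy and integral representation to combine these building blocks and, critically, to exclude any non-trivial one-dimensional rational subrepresentation (which would contribute an uncontrollable rank-one discriminant factor). Non-orientability in dimensions $6$ and $8$ is arranged by including a summand with odd determinant (e.g., a copy of $\mathrm{std}\otimes\mathrm{sign}$ for $S_4$, or an odd number of copies of the dihedral $\sigma$). For $n\geq 10$, I apply Proposition~\ref{prop:toral_extension} to extend $C_0$ (or a smaller model) by additional copies of suitable good integral representations, adjusting parities of determinant $-1$ summands to produce orientable extensions.

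The principal obstacle is exhibiting the required base Bieberbach groups in the small dimensions with the specified holonomy structure, representation type, and orientability: this amounts to finding classes in $H^2(H,\Z^n)$ that give torsion-free extensions, and is typically resolved by explicit cocycle/matrix construction or by appealing to the known classification of low-dimensional flat manifolds. Once such bases are in hand, the verification of (i) for every holonomy form is a direct application of Propositions~\ref{prop:form_decomposition} and~\ref{prop:sum_of_forms} together with the rigidity calculations for each building block.
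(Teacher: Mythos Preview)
Your proposal outlines a strategy but is not a proof: you explicitly identify the ``principal obstacle''---exhibiting the base Bieberbach groups in dimensions $6$ and $8$ with the prescribed holonomy representation and orientability---and then defer it to ``explicit cocycle/matrix construction or \dots the known classification'' without carrying out either. Since the lemma's entire content is the existence of such a flat manifold, this is precisely the step that must be done.

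There is also a genuine error among your building blocks. For the $3$-dimensional standard representation of $S_4$ the invariant positive definite rational forms are the scalar multiples $\langle c,c,c\rangle$, whose discriminant is $c^3\equiv c\in\Qsquare$, not $1$; more generally, an odd-rank absolutely irreducible summand can never pin down the discriminant, because scaling by $c$ changes it by $c$. Discarding this block leaves you only even-dimensional pieces (the $2$-dimensional $D_4$ representation and $\sigma_8,\sigma_{12}$), and then arranging the required \emph{non}-orientability in dimension $8$ is unclear: with four copies of the $D_4$ representation every reflection has determinant $(-1)^4=1$, and $\sigma_8$ lands in $SL_4(\Z)$, so the obvious combinations are orientable. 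You would need a more delicate mixture of representations of a single holonomy group, and none is supplied.

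The paper sidesteps both the existence problem and the summand-by-summand bookkeeping. It takes a manifold $B$ from Lemma~\ref{lem:HW_extension} with $b_1=0$ and holonomy $(\Z_2)^2$, passes to an orientable double cover $\widetilde{B}$ with deck transformation $\alpha$, and sets $C=(\widetilde{B}\times\widetilde{B})/\Z_4$, the generator acting by $(x,y)\mapsto(\alpha(y),x)$. Freeness of this action is immediate from $\alpha$ being fixed-point-free, so $C$ exists with no cohomology computation. The swap forces every holonomy form to be of the shape $g\oplus g$, whence $d(f)=d(g)^2=1$ and $\varepsilon_q(f)=(d(g),-1)_q$ in one line. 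Orientability of $C$ is then governed solely by the parity of $\dim B$ and whether $\alpha$ reverses orientation, which Lemma~\ref{lem:HW_extension} lets one tune to match~(ii).
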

\begin{proof}
    Let $B$ be a flat manifold of dimension $k\geq 3$ with $b_1(B)=0$ and holonomy group $H_B\cong (\Z_2)^2$ as constructed in Lemma~\ref{lem:HW_extension}. We choose a surjection $H_B\rightarrow \Z_2$ such that we have an double cover $\wt{B}\rightarrow B$ where $\wt{B}$ is orientable. Let $\alpha: \wt{B} \rightarrow \wt{B}$ be the non-trivial deck transformation of this cover. We define an action of $\Z_4$ on $\wt{B}\times \wt{B}$ where the generator acts by the isometry
    \begin{align*}
        \beta:\wt{B}\times \wt{B}&\rightarrow\wt{B}\times \wt{B}\\
        (x,y)&\mapsto (\alpha(y),x).
    \end{align*}
    Note that $\beta$ has no fixed points since $\beta^2(x,y)=(\alpha(x),\alpha(y))$ has no fixed points. Let $C$ be the quotient of $\wt{B}\times \wt{B}$ by this action of $\Z_4$. We see that $C$ is a $2k$-dimensional flat manifold. Note that $C$ is orientable if and only if $\beta$ is orientation preserving. If $k$ is odd, then $\beta$ is orientation preserving if and only if $\alpha$ is orientation reversing. Thus for $k$ odd, we see that $C$ is orientable if and only if $B$ is non-orientable. By Lemma~\ref{lem:HW_extension}, this is possible for all odd $k\geq 5$. If $k$ is even, then $\beta$ is orientation preserving if and only if $\alpha$ is orientation preserving. Thus for $k$ even, we see that $C$ is orientable if and only if $B$ is orientable. By Lemma~\ref{lem:HW_extension}, this is possible for all even $k\geq 6$. Thus we see that $B$ can be chosen so that $C$ is orientable if $k\geq 5$ and non-orientable for $k=3,4$.

    Finally we calculate the holonomy forms for $C$. Let $\rho$ be a holonomy representation for $B$. Since the cover $\wt{B}\rightarrow B$ is induced by a surjection $H_B\rightarrow \Z_2$, we may consider $H_{\wt{B}}$ as a subgroup of $H_{B}$. By construction we see that there is a holonomy representation $\sigma$ for $C$ whose image is generated by matrices of the form
    \begin{align}\label{eq:B_hol1}
    \begin{pmatrix}
        \rho(h_1) &0\\
        0& \rho(h_2)
    \end{pmatrix}&\quad \text{for $h_1,h_2\in H_{\wt{B}}$ and}\\
    \begin{pmatrix}
        0 &\rho(h_1)\\
        \rho(h_2)&0
    \end{pmatrix}&\quad \text{for $h_1\in H_B\setminus H_{\wt{B}}$ and $h_2\in H_{\wt{B}}$.}\label{eq:B_hol2}
    \end{align}
    Here the matrices of \eqref{eq:B_hol1} are those of the holonomy representation of $\wt{B}\times \wt{B}$ and those of \eqref{eq:B_hol2} arise from the action of $\beta$.

    Let $\rho_1, \rho_2, \rho_3$ be the three non-trivial irreducible representations of $H_B\cong (\Z_2)^2$. The holonomy representation for $\rho$ decomposes as a sum of copies of these three representation. This implies that the holonomy representation $\sigma$ can be decomposed as a sum of representations $\sigma_i$ whose images are generated by
    \[\text{
    $\begin{pmatrix}
        \rho_i(h_1) &0\\
        0& \rho_i(h_2)
    \end{pmatrix}$ for $h_1,h_2\in H_{\wt{B}}$
    }\]
    and 
    \[\text{
    $\begin{pmatrix}
        0 &\rho_i(h_1)\\
        \rho_i(h_2)&0
    \end{pmatrix}$ for $h_1\in H_B\setminus H_{\wt{B}}$ and $h_2\in H_{\wt{B}}$.}
    \]
    However, for all $i=1,2,3$, it is easy to verify that the representation $\sigma_i$ contains the matrix $\begin{pmatrix}
        0 &-1\\
        1 & 0
    \end{pmatrix}$ in its image. Thus we see that if $\sigma_i$ has image in $O(g;\Q)$, then $g$ is a diagonal form $g=\langle a, a\rangle$. By Proposition~\ref{prop:form_decomposition}, this implies that any holonomy form $f$ for $C$ is equivalent to a diagonal form
    \[
    f\cong \langle a_1, \dots, a_k, a_1, \dots, a_k \rangle.
    \]
    for $a_1, \dots, a_k \in \Q_{>0}$. That is, $f\cong g\oplus g$ for some form $g$. From this we calculate that
    \[
    \text{$d(f)=1$ and $\varepsilon_q(f)=\varepsilon_p(g)^2 (d(g), d(g))_q=(d(g),-1)_q$ for all $q$.}
    \]
    In particular, $\varepsilon_q(f)=1$ whenever $-1$ is a square in $\Q_q$.
\end{proof}

\section{The UCC property in dimensions \texorpdfstring{$n\equiv 0\bmod 4$}{n=0 mod 4}}
In this section, we construct a family of flat manifolds with the UCC property in all dimensions $n\equiv 0\bmod 4$ and $n\geq 12$. In addition to being examples forming part of Theorem~\ref{thm:UCCbound}, these manifolds will be important building blocks in future constructions. The construction mimics that of Lemma~\ref{lem:C2}. However, rather than finding an action of $\Z_4$ on $\wt{B}\times \wt{B}$, we will construct an action of the quaternion group $Q_8$ on $\wt{B}^{\times 4}$.
\begin{lemma}\label{lem:Emanifold}
    For all $n\geq 12$ and $n\equiv 0 \bmod 4$, there is an orientable flat $n$-manifold $E$ such that
    \begin{enumerate}[(i)]
        \item\label{it:E_hol_forms} every holonomy form $f$ of $E$ satisfies $d(f)=1$ and $\varepsilon_p(f)=1$ for all primes $p$; and
        \item\label{it:E_cover} there is a connected double cover $\wt{E}\rightarrow E$ such that $b_1(\wt{E})=0$.
    \end{enumerate}
\end{lemma}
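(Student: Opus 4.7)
The plan is to imitate Lemma~\ref{lem:C2}, replacing the $\Z_4$-action on $\wt{B}\times\wt{B}$ with a free $Q_8$-action on $\wt{B}^{\times 4}$. The quaternion group $Q_8=\{\pm 1,\pm i,\pm j,\pm k\}$ has a faithful $4$-dimensional integral representation $\tau:Q_8\to SO(4,\Z)$ coming from left multiplication on $\Z^4=\Z\langle 1,i,j,k\rangle\subset\mathbb H$, with $\tau(-1)=-I_4$; crucially $\tau$ is irreducible over $\Q$ and its image preserves the standard form $\langle 1,1,1,1\rangle$.

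Write $n=4k$ with $k\geq 3$ and pick a flat $k$-manifold $B$ from Lemma~\ref{lem:HW_extension}; let $\alpha:\wt{B}\to\wt{B}$ be the nontrivial deck involution of the orientable double cover. Label the factors of $\wt{B}^{\times 4}$ by $\{1,i,j,k\}$ and define $\beta:Q_8\to\Isom(\wt{B}^{\times 4})$ by letting $g\in Q_8$ send the content of the $h$-factor to the $(gh)$-factor, applying $\alpha$ whenever $gh\in Q_8$ carries a negative sign. Then $\beta_{-1}=\alpha^{\times 4}$ is fixed-point free, and because every non-identity element of $Q_8$ has $-1$ as a power, $\beta$ is a free action. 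Each $\mathrm{Lin}(\beta_g)$ has a block-permutation form with an even number of $\alpha$-twists, so $\det\mathrm{Lin}(\beta_g)=1$, making $E:=\wt{B}^{\times 4}/Q_8$ an orientable flat $n$-manifold regardless of the orientability of $B$.

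For (i), decompose the rational holonomy representation $\sigma$ of $E$ into irreducibles over $\Q$ in the spirit of Lemma~\ref{lem:C2}. Using Clifford theory on the $Q_8$-orbits of characters of $H_{\wt{B}}^4$, every irreducible summand of $\sigma$ turns out to be $4$-dimensional and to preserve (up to scalar) the standard form $\langle 1,1,1,1\rangle$; the two families to handle are $\tau$ itself on the trivial $H_{\wt{B}}^4$-isotypic component, and the induced representations from the $(\chi,1,1,1)$-orbit, whose $H_{\wt{B}}^4$-action is by diagonal $\pm 1$ while $Q_8$ permutes the four orbit-blocks via $V_4\hookrightarrow S_4$ (with or without $\tau$-type sign twists according to whether $A=\mathrm{Lin}(\alpha)$ acts as $\pm 1$ on the summand). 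Applying Proposition~\ref{prop:form_decomposition} then shows that every holonomy form $f$ for $E$ is equivalent to $\bigoplus_j\langle c_j,c_j,c_j,c_j\rangle\cong g^{\oplus 4}$ for some positive definite rational form $g$ of rank $n/4$, and iterating Proposition~\ref{prop:sum_of_forms} yields $d(f)=d(g)^4=1$ and $\varepsilon_p(f)=1$ for every prime $p$.

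For (ii), set $\wt{E}:=\wt{B}^{\times 4}/\langle i\rangle$; this is a connected double cover of $E$. Any trivial subrepresentation of the holonomy of $\wt{E}$ must lie in the $H_{\wt{B}}^4$-fixed subspace $V_0$, which is the product of the $V_{\pi_1}$-components across the four factors (dimension $4a_1$, with $a_1$ the multiplicity of $\pi_1$ in $\rho_B$). Since $\pi_1(\alpha)=-1$, the element $i\in Q_8$ acts on $V_0$ as $\tau(i)\otimes I_{a_1}$, and $\tau(i)$ has eigenvalues $\pm\sqrt{-1}$ with no fixed vector, so Proposition~\ref{prop:b1} gives $b_1(\wt{E})=0$. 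The main obstacle is the irreducible-decomposition argument in (i): identifying the irreducibles of $H_E$ that appear in $\sigma$ and checking that each one has invariant form equivalent to a scalar multiple of the standard $4$-dimensional identity form, so that Proposition~\ref{prop:form_decomposition} forces the presentation $f\cong g^{\oplus 4}$.
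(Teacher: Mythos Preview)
Your proposal is correct and follows essentially the same strategy as the paper: both construct $E$ as the quotient $\wt{B}^{\times 4}/Q_8$ under the natural $Q_8$-action in which $-1$ acts as the diagonal deck involution $\alpha^{\times 4}$, both take $\wt{E}=\wt{B}^{\times 4}/\langle i\rangle$ for part~(ii), and both reach the conclusion $f\cong g^{\oplus 4}$ for part~(i).

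The one noteworthy difference is in how you establish (i). You invoke Clifford theory on the extension $1\to H_{\wt{B}}^{4}\to H_E\to Q_8\to 1$ to argue that every rational irreducible constituent of the holonomy representation is $4$-dimensional with invariant form a scalar multiple of $I_4$. The paper instead avoids finding the irreducible decomposition altogether: it pulls back the decomposition $\rho_B=\rho_1^{a_1}\oplus\rho_2^{a_2}\oplus\rho_3^{a_3}$ to a decomposition $\sigma=\sigma_1^{a_1}\oplus\sigma_2^{a_2}\oplus\sigma_3^{a_3}$ into $4$-dimensional summands (not claimed to be irreducible), and then exhibits two explicit matrices --- essentially $\tau(i)$ and $\tau(j)$ --- in the image of every $\sigma_i$, which visibly force the invariant form on each summand to be $\langle a,a,a,a\rangle$. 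This sidesteps the Clifford-theory bookkeeping you flag as ``the main obstacle''; your route works too, but the paper's explicit-matrix shortcut is what makes the verification immediate. For (ii) the paper similarly just observes that each $\sigma_i$ restricted to $H_{\wt E}$ splits as two $2$-dimensional irreducibles, while you argue via the $H_{\wt B}^4$-fixed subspace --- both are valid.
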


\begin{proof}
    Let $B$ be a flat manifold of dimension $k\geq 3$ with $b_1(B)=0$ and holonomy group $H_B\cong (\Z_2)^2$ as produced by Lemma~\ref{lem:HW_extension}. We choose a surjection $H_B\rightarrow \Z_2$ which induces a double cover $\wt{B}\rightarrow B$ where $\wt{B}$ is orientable. Let $\alpha: \wt{B}\rightarrow \wt{B}$ be the non-trivial deck transformation of this cover. The quaternion group 
\[Q_8=\langle i,j | i^2=j^2, i^4=e, iji=j\rangle\]
acts on $\wt{B}^{\times 4}$ by isometries
\[
\text{$i(x,y,z,w)= (y,\alpha(x),w,\alpha(z))$ and $j(x,y,z,w)=(z,\alpha(w),\alpha(x),y)$.}
\]
Observe that the central element $i^2$ acts by the diagonal action
\[
i^2(x,y,z,w)= (\alpha(x),\alpha(y),\alpha(z),\alpha(w))
\]
on $\wt{B}^{\times 4}$ which has no fixed points since $\alpha$ has no fixed points on $\wt{B}$.
Since every non-central element $g\in Q_8$ satisfies $g^2=i^2$, this implies that every non-trivial element of $Q_8$ acts on $\wt{B}^{\times 4}$ without fixed points. Thus the quotient $E=\wt{B}^{\times 4}/Q_8$ is a $4k$-dimensional flat manifold. Moreover $\wt{B}$ is orientable and one can check that both $i$ and $j$ are orientation-preserving (irrespective of whether or not $B$ is orientable). This implies that $E$ is orientable.

Next we calculate the holonomy forms for $C$. Let $\rho$ be a holonomy representation for $B$. We consider $H_{\wt{B}}$ as a subgroup of $H_{B}$. By construction we see that there is a holonomy representation $\sigma$ for $C$ whose image is generated by matrices of the form
\begin{align}\label{eq:holE1}
    \begin{pmatrix}
        \rho(h_1) &0&0&0\\
        0& \rho(h_2)&0&0\\
        0& 0&\rho(h_3)&0\\
        0& 0& 0&\rho(h_4)
    \end{pmatrix}&\quad \text{for $h_1,h_2,h_3, h_4\in H_{\wt{B}}$;}\\
    \label{eq:holE2}\begin{pmatrix}
        0&\rho(h_1) &0&0\\
        \rho(h_2)&0&0&0\\
        0& 0&0&\rho(h_3)\\
        0& 0&\rho(h_4) &0
    \end{pmatrix}& \quad\text{for $h_1,h_3\in H_B\setminus H_{\wt{B}}$ and $h_2,h_4\in H_{\wt{B}};$ and}\\
    \label{eq:holE3}\begin{pmatrix}
        0&0 &\rho(h_3)&0\\
        0&0&0&\rho(h_4)\\
        \rho(h_1)& 0&0&0\\
        0& \rho(h_2) &0&0
    \end{pmatrix}&\quad \text{for $h_1,h_4\in H_B\setminus H_{\wt{B}}$ and $h_2,h_3\in H_{\wt{B}}$}.
    \end{align}
    The matrices of \eqref{eq:holE1} are precisely those of the holonomy representation of $\wt{B}^{\times 4}$ and the matrices of \eqref{eq:holE2} and \eqref{eq:holE3} are those coming from the action of $i$ and $j$.

    Let $\rho_1, \rho_2, \rho_3$ be the three non-trivial irreducible representations of $H_B\cong (\Z_2)^2$. The holonomy representation for $\rho$ decomposes as a sum of copies of these three representation. This implies that the holonomy representation $\sigma$ can be decomposed as a sum of representations $\sigma_i$ whose images are generated by
    \begin{align*}
    \begin{pmatrix}
        \rho_i(h_1) &0&0&0\\
        0& \rho_i(h_2)&0&0\\
        0& 0&\rho_i(h_3)&0\\
        0& 0& 0&\rho_i(h_4)
    \end{pmatrix}&\quad \text{for $h_1,h_2,h_3, h_4\in H_{\wt{B}}$;}\\
    \begin{pmatrix}
        0&\rho_i(h_1) &0&0\\
        \rho_i(h_2)&0&0&0\\
        0& 0&0&\rho_i(h_3)\\
        0& 0&\rho_i(h_4) &0
    \end{pmatrix}& \quad\text{for $h_1,h_3\in H_B\setminus H_{\wt{B}}$ and $h_2,h_4\in H_{\wt{B}};$ and}\\
    \begin{pmatrix}
        0&0 &\rho_i(h_3)&0\\
        0&0&0&\rho_i(h_4)\\
        \rho_i(h_1)& 0&0&0\\
        0& \rho_i(h_2) &0&0
    \end{pmatrix}&\quad \text{for $h_1,h_4\in H_B\setminus H_{\wt{B}}$ and $h_2,h_3\in H_{\wt{B}}$}.
    \end{align*}
    However, for all $i=1,2,3$, it is easy to verify that the image of the representation $\sigma_i$ contains the matrices
    \[
    \begin{pmatrix}
        0&-1 &0&0\\
        1&0&0&0\\
        0& 0&0&-1\\
        0& 0&1 &0
    \end{pmatrix}\quad\text{and}\quad
    \begin{pmatrix}
        0&0 &1&0\\
        0&0&0&-1\\
        -1& 0&0&0\\
        0& 1 &0&0
    \end{pmatrix}.
    \]
    From these one can verify that $\sigma_i$ has image in $O(g;\Q)$, then $g$ is a diagonal form $g=\langle a, a,a,a\rangle$. Thus we see that any holonomy form $f$ for $C$ is equivalent to
    \[
    f\cong h\oplus h\oplus h\oplus h
    \]
     for some rational form $h$. From this we calculate that
    \[
    \text{$d(f)=1$ and $\varepsilon_q(f)=1$ for all $q$,}
    \]
    as required for \eqref{it:E_hol_forms}.
    Next we construct the cover $\wt{E}\rightarrow E $ required for \eqref{it:E_cover}. We take the quotient of $\wt{B}^{\times 4}$ by the $\Z_4$-action generated by $i$. This is clearly a double cover of $E$. The holonomy representation for $\wt{E}$ is generated by the matrices of the form \eqref{eq:holE1} and \eqref{eq:holE2}. This representation can be checked to contain no trivial representations. For example, one sees that upon restriction to the holonomy group of $\wt{E}$ each of the representations $\sigma_i$ decomposes into two 2-dimensional irreducible rational representations. By Proposition~\ref{prop:b1}, this implies that $b_1(\wt{E})=0$, as required.
\end{proof}
Taking the product $E\times S^1$ yields flat manifolds with the UCC property in dimensions $n\equiv 1 \bmod 4$. More generally, it is not hard to verify that if $M$ is any compact flat manifold with the UCC property, then a manifold of the form $E\times M$ also has the UCC property.
\begin{lemma}\label{lem:EtimesS1}
    For all $n\geq 13$ and $n\equiv 1 \bmod 4$, there exists an orientable flat $n$-manifold such that every holonomy form satisfies $\varepsilon_p(f)=1$ for all primes $p$.
\end{lemma}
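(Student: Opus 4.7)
The plan is to take $B = E \times S^1$, where $E$ is the orientable flat $(n-1)$-manifold produced by Lemma~\ref{lem:Emanifold}. Since $n \equiv 1 \bmod 4$ and $n \geq 13$, we have $n-1 \equiv 0 \bmod 4$ and $n-1 \geq 12$, so Lemma~\ref{lem:Emanifold} indeed furnishes such an $E$. Both $E$ and $S^1$ are orientable, so $B$ is an orientable flat $n$-manifold.

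To control the holonomy forms of $B$, I would invoke Lemma~\ref{lem:product_hol1}: any holonomy form $f$ for $B$ is equivalent to a direct sum $g \oplus h$, where $g$ is a holonomy form for $E$ and $h$ is a holonomy form for $S^1$. Since $S^1$ has trivial holonomy, $h$ is just a positive definite rank-$1$ rational form $\langle a \rangle$ for some $a \in \Q_{>0}$. In particular, $\varepsilon_p(h) = 1$ for every prime $p$ (empty product) and $d(h) = a$. On the other side, Lemma~\ref{lem:Emanifold}\eqref{it:E_hol_forms} gives $d(g) = 1$ and $\varepsilon_p(g) = 1$ for all primes $p$.

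With these invariants in hand, the final step is a direct application of Proposition~\ref{prop:sum_of_forms}:
\[
\varepsilon_p(f) = \varepsilon_p(g)\,\varepsilon_p(h)\,(d(g),d(h))_p = 1 \cdot 1 \cdot (1,a)_p = 1
\]
for every prime $p$, using that the Hilbert symbol $(1,a)_p$ is trivial. This gives exactly the desired conclusion.

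There is no real obstacle here: once $E$ is in hand from Lemma~\ref{lem:Emanifold}, the argument is essentially a one-line Hasse–Witt computation. The only thing worth double-checking is that the range $n \geq 13$ is precisely what is needed to land in the regime where Lemma~\ref{lem:Emanifold} produces $E$ (namely $\dim E \geq 12$ and $\dim E \equiv 0 \bmod 4$), which it does.
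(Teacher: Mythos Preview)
Your proof is correct and follows essentially the same route as the paper: take $E\times S^1$ with $E$ from Lemma~\ref{lem:Emanifold}, decompose any holonomy form via Lemma~\ref{lem:product_hol1} as $g\oplus\langle a\rangle$, and use $d(g)=1$, $\varepsilon_p(g)=1$ together with Proposition~\ref{prop:sum_of_forms} to conclude. The only difference is that you spell out the dimension check and orientability more explicitly than the paper does.
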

\begin{proof}
Let $E$ be a flat manifold as constructed in Lemma~\ref{lem:Emanifold}. By Lemma~\ref{lem:product_hol1}, any holonomy form $f$ for $S^1\times E$ is equivalent to 
    \[
    f\cong \langle a \rangle \oplus g,
    \]
    where $a\in \Q_{>0}$ and $g$ is a holonomy form for $E$. Thus we have that
    \[
    \varepsilon_p(f)=(a,d(g))_p \varepsilon_p(g)=1,
    \]
    as required.
\end{proof}

\section{The UCC property in dimensions \texorpdfstring{$n\equiv 3 \bmod 4$}{n=3 mod 4}}
We now embark on the construction of flat-manifolds with the UCC property in dimensions $n\equiv 3\bmod 4$. These will be the final manifolds required to complete the proof of Theorem~\ref{thm:UCCbound}. First we construct some useful manifolds with holonomy group $(\Z_3)^3$.

\begin{lemma}\label{lem:C3}
    For any even $n\geq 10$, there exists an orientable flat $n$-manifold $C$ with a cyclic 3-fold cover $\wt{C}\rightarrow C$ such that $b_1(\wt{C})=0$ and the holonomy group of $\wt{C}$ is $(\Z_3)^2$.
\end{lemma}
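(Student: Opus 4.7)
The plan is to construct $C$ as an orientable flat $n$-manifold whose holonomy is the Heisenberg group $\mathrm{Heis}(\F_3)$ of order $27$, and to take $\wt{C}\to C$ to be the cover associated to an index-$3$ normal subgroup $N$. The key point is that $\mathrm{Heis}(\F_3)$ has exponent $3$, so every subgroup of order $9$ is abelian of exponent $3$ and hence isomorphic to $(\Z_3)^2$; moreover every index-$3$ subgroup of $\mathrm{Heis}(\F_3)$ is normal (as $3$ is the smallest prime divisor of $27$), contains the center $Z\cong\Z_3$, and yields cyclic quotient $\mathrm{Heis}(\F_3)/(\Z_3)^2\cong\Z_3$. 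Thus such a cover $\wt{C}\to C$ is automatically a cyclic $3$-fold cover with $\wt{C}$ having holonomy $(\Z_3)^2$.

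To construct $C$, I would first produce a base flat manifold $C_0$ of minimal dimension with $\mathrm{Heis}(\F_3)$-holonomy, using the $6$-dimensional faithful rational irreducible representation of $\mathrm{Heis}(\F_3)$ (obtained by restriction of scalars from the $3$-dimensional faithful complex irreducible representation, which takes values in $\Z[\omega]$). Existence of such a Bieberbach group follows from the classical Auslander--Kuranishi realization theorem, or can be verified directly by exhibiting a torsion-free cocycle in $H^2(\mathrm{Heis}(\F_3);\Z^6)$. To reach an arbitrary even dimension $n\geq 10$, I would apply the toral extension of Proposition~\ref{prop:toral_extension} using $2$-dimensional rational irreducible representations of $\mathrm{Heis}(\F_3)$ (each of which factors through the abelianization $(\Z_3)^2$); the kernels are chosen to differ from the image of the fixed subgroup $N$ in the abelianization, so that the extensions restrict non-trivially to $N$.

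The required properties then follow by direct verification. Orientability of $C$ is automatic because every non-identity element of $\mathrm{Heis}(\F_3)$ has order $3$, and any orthogonal matrix of order $3$ has determinant $+1$ (its non-real eigenvalues appear in conjugate pairs $\{\omega,\omega^{-1}\}$ alongside possible $+1$-eigenvalues). For the vanishing $b_1(\wt{C})=0$, note that the $6$-dimensional faithful Heisenberg representation restricted to $N\cong (\Z_3)^2$ decomposes as a sum of three non-trivial $2$-dimensional rational irreducible representations of $N$ (corresponding to the $\mathrm{Heis}(\F_3)/N$-orbit of characters whose restriction to $Z$ is the central character $\omega$), and each toral extension irrep restricts to the remaining non-trivial $2$-dimensional real irrep of $N$ (the one factoring through $N/Z$); thus the holonomy representation of $\wt{C}$ contains no trivial subrepresentation, and Proposition~\ref{prop:b1} yields $b_1(\wt{C})=0$. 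The chief technical obstacle is the construction of the base Bieberbach group with $\mathrm{Heis}(\F_3)$ holonomy in the first step, which reduces to verifying the torsion-freeness of an explicit cohomology class.
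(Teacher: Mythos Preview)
Your approach differs from the paper's: the paper takes $C$ to have \emph{abelian} holonomy $(\Z_3)^3$, writing down an explicit free action of $(\Z_3)^3$ on $T^{10}$ by affine maps (and then passing to higher even dimensions by toral extension), whereas you propose the non-abelian group $\mathrm{Heis}(\F_3)$. Both routes would, in principle, produce a cyclic $3$-fold cover $\wt{C}$ with holonomy $(\Z_3)^2$, and your observations on orientability and on the restriction of the $6$-dimensional irrep to $N$ are correct.

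There is, however, a genuine gap: your $6$-dimensional base $C_0$ does not exist. On the faithful $6$-dimensional rational irreducible $\mathrm{Heis}(\F_3)$-module $M_6$ the center $Z\cong\Z_3$ acts with eigenvalues $\omega,\bar\omega$ (each of multiplicity $3$) and hence has no non-zero fixed vectors. Thus $H^2(Z;M_6)=M_6^{Z}/N_Z M_6=0$, so every extension $0\to M_6\to\Gamma\to\mathrm{Heis}(\F_3)\to 1$ splits over $Z$, and $\Gamma$ has $3$-torsion; there is no flat manifold with this holonomy module. Invoking Auslander--Kuranishi does not help, since that argument realizes the holonomy by adjoining copies of the regular representation, which introduces trivial summands and forces $b_1(\wt{C})>0$.

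The idea is reparable in principle: to make $M^Z\neq 0$ you must include in the base module at least one $Z$-trivial $2$-dimensional irrep (necessarily factoring through the abelianization), chosen non-trivial on $N/Z$ so that $b_1(\wt{C})=0$ is preserved. But you then still owe an explicit special class in $H^2(\mathrm{Heis}(\F_3);M)$ with non-zero restriction to every order-$3$ subgroup---precisely the step you flagged as the ``chief technical obstacle'' and left undone. The paper's abelian construction sidesteps all of this by exhibiting explicit commuting affine generators $\alpha,\beta,\gamma$ on $T^{10}$ and checking freeness directly.
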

\begin{proof}
    We write down an description for the manifold $C$ for $n=10$. This is modelled on the manifolds constructed by Hiller-Sah with holonomy group $(\Z_p)^{2}$ \cite{Hiller1986flat}. Let $\alpha, \beta,\gamma$ be a set of generators for $(\Z_3)^3$. Consider the matrix of order three $A=\begin{pmatrix}
        0 & -1\\
        1& -1
    \end{pmatrix}$ and the vector $\mu=\begin{pmatrix}
        \frac23 \\ \frac13 
    \end{pmatrix}$.
    We define an action of $(\Z_3)^3$ on the 10-dimensional torus $T^{10}=\R^{10}/\Z^{10}$ by setting
    \[
    \alpha(v)=\begin{pmatrix}
        I &0 &0 &0 &0\\
        0 &A &0 &0 &0\\
        0 &0 &A &0 &0\\
        0 &0 &0 &A &0\\
        0 &0 &0 &0 &A
    \end{pmatrix}v +
    \begin{pmatrix}
        \mu\\
        \mu \\
        0 \\
        0 \\
        0
    \end{pmatrix},
    \]
    \[
    \beta(v)=\begin{pmatrix}
        A &0 &0 &0 &0\\
        0 &A &0 &0 &0\\
        0 &0 &I &0 &0\\
        0 &0 &0 &A^2 &0\\
        0 &0 &0 &0 &A
    \end{pmatrix}v +
    \begin{pmatrix}
        0\\
        0 \\
        \mu \\
        \mu \\
        0
    \end{pmatrix}
    \]
    and
    \[
    \gamma(v)=\begin{pmatrix}
        I &0 &0 &0 &0\\
        0 &A &0 &0 &0\\
        0 &0 &I &0 &0\\
        0 &0 &0 &I &0\\
        0 &0 &0 &0 &I
    \end{pmatrix}v +
    \begin{pmatrix}
        0\\
        0 \\
        0 \\
        0 \\
        \mu
    \end{pmatrix}
    \]
    One can check that this is in fact a well-defined action of $(\Z_3)^{3}$: the actions of $\alpha$, $\beta$ and $\gamma$ all have order three and the the fact that $A\mu-\mu \in \Z^2$ implies that the actions of $\alpha$, $\beta$ and $\gamma$ all commute.
    
    Furthermore, by direct calculation, one can verify that this acts without fixed points. Thus the quotient is a flat manifold with holonomy group $(\Z_3)^3$. The $\Z_3$-cover $\wt{C}$ is obtained by taking the quotient of $T^{10}$ by the $(\Z_3)^{2}$ subgroup generated by $\alpha$ and $\beta$. We have $b_1(\wt{C})=0$ because its holonomy representation decomposes into 2-dimensional irreducible representations, using Proposition \ref{prop:b1}. For the manifolds of dimension $n\geq 12$, we take toral extensions of $C$ by 2-dimensional rational representations of $(\Z_3)^3$ that are non-trivial on the subgroup generated by $\alpha$ and $\beta$. 
\end{proof}
Next we construct an isometry of order three on the Hantszche-Wendt manifold. The 3-dimensional Hantszche-Wendt manifold $B_{HW}$ can be constructed as the quotient of the 3-dimensional torus $T^3= \R^3/\Z^3$, by the action of $(\Z_2)^2$ generated by $\gamma, \delta$, which act by
\[
\gamma(v)=\begin{pmatrix}
    -1 & 0 & 0\\
    0 & -1 & 0\\
    0 & 0 & 1\\
\end{pmatrix}v +\begin{pmatrix}
    0 \\
    \frac12 \\
    \frac12 \\
\end{pmatrix}
\]
and \[
\delta(v)=\begin{pmatrix}
    1 & 0 & 0\\
    0 & -1 & 0\\
    0 & 0 & -1\\
\end{pmatrix}v +\begin{pmatrix}
\frac12 \\
    0 \\
    \frac12 \\
\end{pmatrix}.
\]
From this description, we see that the isometry of $T^3= \R^3/\Z^3$ that cyclically permutes the coordinates descends to an isometry $\tau: B_{HW}\rightarrow B_{HW}$. 
\begin{lemma}\label{lem:3mod4UCC}
    For every $n$ such that $n\geq 35$ and $n\equiv 3\bmod 4$, there exists an orientable flat $n$-manifold $F$ for which every holonomy form $f$ satisfies
    \[
    \text{$(d(f),-1)_q \varepsilon_q(f)=\begin{cases}
        (3,3)_p& n=35\\
        1 & n\geq 39
    \end{cases}$ for all primes $q$.}
    \]
\end{lemma}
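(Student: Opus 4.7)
The plan is to define $F = (\wt{C}\times B_{HW})/\Z_3$, where $\wt{C}$ is the cyclic $3$-fold cover produced by Lemma~\ref{lem:C3} applied with $\dim C = n-3$ (which is $\equiv 0 \bmod 4$ and at least $32$), and the $\Z_3$-action is generated by the diagonal isometry $(\phi,\tau)$, with $\phi$ the deck transformation of $\wt{C}\to C$ and $\tau$ the order $3$ isometry of $B_{HW}$ defined before the lemma. Because $\phi$ acts freely on $\wt{C}$, the diagonal action is free on the product, so $F$ is a compact flat $n$-manifold. Both $\phi$ (a deck transformation of a cover between orientable manifolds) and $\tau$ (induced by a cyclic permutation of coordinates, which has determinant $+1$) preserve orientation, so $F$ is orientable.

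The next step is to analyze the holonomy representation. The translation lattice of $F$ is $\Z^{n-3}\oplus \Z^3$ and $H_F$ fits into $1\to H_{\wt{C}}\times H_{B_{HW}} \to H_F \to \Z_3 \to 1$. The representation $\rho_F$ respects the splitting $\R^{n-3}\oplus \R^3$: on the first summand it factors through $H_F/H_{B_{HW}}\cong H_C$ as $\rho_C$, while on the second it factors through $H_F/H_{\wt{C}} \cong (\Z_2)^2\rtimes \Z_3 \cong A_4$ as the standard $3$-dimensional irreducible representation. Since $b_1(\wt{C})=0$, these two summands share no irreducible constituent as representations of the normal subgroup $H_{\wt{C}}\times H_{B_{HW}}$, so the Schur-type argument used in Lemma~\ref{lem:product_hol_refined} implies that any holonomy form $f$ of $F$ decomposes as $f = g \oplus h$, where $g$ is a holonomy form for $C$ and $h$ is an $A_4$-invariant form on $\R^3$.

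The two factors are then handled separately. Since the $3$-dimensional irreducible representation of $A_4$ is absolutely irreducible, Schur forces $h \cong \langle a, a, a \rangle$ for some $a \in \Q_{>0}$, giving $d(h) = a$ and $\varepsilon_q(h) = (a,-1)_q$. The manifold $C$ satisfies the hypotheses of Lemma~\ref{lem:modpholonomy} with $p=3$: its $b_1$ vanishes because $\rho_C|_{H_{\wt{C}}} = \rho_{\wt{C}}$ has no trivial subrepresentation, its holonomy is the abelian $3$-group $(\Z_3)^3$, and $\dim C = n-3 \equiv 0 \bmod 4$. Consequently $d(g) = 1$. A short calculation using Proposition~\ref{prop:sum_of_forms} then causes the $a$-dependence to collapse and yields
\[
(d(f), -1)_q\,\varepsilon_q(f) = \varepsilon_q(g).
\]

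The heart of the argument, and the main obstacle I expect, is computing $\varepsilon_q(g)$ and verifying it takes the claimed value uniformly over all holonomy forms. Writing $\rho_C = \rho_1 \oplus \cdots \oplus \rho_\ell$ with $\ell = (n-3)/2$ two-dimensional rational irreducibles of $(\Z_3)^3$, Proposition~\ref{prop:cyclic_rep_calc} gives $d(\rho_i) = 3$ for each summand, and iterated application of Proposition~\ref{prop:sum_of_forms} produces a factor $(3,3)_q^{\binom{\ell}{2}}$ whose parity depends on $\ell \bmod 4$; this parity is precisely what separates $n = 35$ (where $\ell = 16$) from $n \geq 39$ in the statement. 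The individual Hilbert-symbol contributions from each $2$-dimensional block must then be controlled by a careful choice of toral-extension blocks in the Hiller--Sah construction underlying Lemma~\ref{lem:C3}, and shown to combine with the $(3,3)_q^{\binom{\ell}{2}}$ factor to give the claimed invariant independently of the chosen form. Pushing this Hilbert-symbol bookkeeping through at $q = 2,3$ and the other non-split primes is where the bulk of the technical effort will go.
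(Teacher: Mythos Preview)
Your reduction is sound up through the identity $(d(f),-1)_q\,\varepsilon_q(f)=\varepsilon_q(g)$, but the argument collapses exactly where you anticipate trouble: $\varepsilon_q(g)$ is \emph{not} determined by the manifold $C$. Each two-dimensional $\Z_3$-block in $\rho_C$ admits as invariant form any $\langle a,3a\rangle$ with $a\in\Q_{>0}$, contributing a factor $(a,-3)_q$ to $\varepsilon_q(g)$. At a prime $q$ with $-3\notin(\Q_q^\times)^2$---for instance $q=2$, $q=3$, or any $q\equiv 2\bmod 3$---this factor can be set to either sign by varying $a$. This is precisely why Lemma~\ref{lem:modpholonomy} controls $\varepsilon_q$ only at primes where $-p$ is a square. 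Crucially, a ``careful choice of toral-extension blocks'' cannot help: the ambiguity lies in the choice of holonomy \emph{form} for the already fixed manifold $C$, not in how $C$ is assembled. So your $F$ admits holonomy forms in more than one projective class and fails to have the UCC property.

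The paper circumvents this by \emph{squaring} the $C$-contribution. It sets $F=\bigl(\wt{C}\times\wt{C}\times\wt{E}\times B_{HW}\bigr)/\Z_6$, where $\wt{E}\to E$ is the double cover with $b_1=0$ from Lemma~\ref{lem:Emanifold} and the generator acts by $(x,y,z,w)\mapsto(\alpha(y),\alpha(x),\beta(z),\tau(w))$. The order-two element $g^3$ swaps the two $\wt{C}$ factors (freeness supplied by $\beta$ on $\wt{E}$), forcing every holonomy form into the shape $g\oplus g\oplus h\oplus\langle a,a,a\rangle$. Now $\varepsilon_q(g\oplus g)=\varepsilon_q(g)^2\,(d(g),d(g))_q=(d(g),d(g))_q$ depends only on $d(g)$, which \emph{is} pinned down by Lemma~\ref{lem:modpholonomy}, while $h$ contributes trivially by Lemma~\ref{lem:Emanifold}\eqref{it:E_hol_forms}. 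Choosing $\dim\wt{C}\equiv 2$ versus $0\bmod 4$ then gives $d(g)=3$ versus $1$, producing the two cases of the statement.
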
    
\begin{proof}
    Let $E$ be a flat manifold of dimension $12+4\ell$ as constructed in Lemma~\ref{lem:Emanifold}. Let $\beta : \wt{E} \rightarrow \wt{E}$ be the non-trivial deck transformation for a connected double cover $\wt{E}\rightarrow E$ with $b_1(\wt{E})=0$. Let $C$ be a $10+2k$-dimensional flat manifold as constructed in Lemma~\ref{lem:C3} with holonomy $(\Z_3)^3$. This admits a regular 3-fold cover $\wt{C}\rightarrow C$ with $b_1(\wt{C})=0$. Let $\alpha: \wt{C} \rightarrow \wt{C}$ be a non-trivial deck transformation for this covering. Let $B_{HW}$ be the 3-dimensional Hantzsche-Wendt manifold with $\tau:B_{HW}\rightarrow B_{HW}$ the order three isometry constructed above. We construct $F$ by taking the quotient of $\wt{C}^{\times 2}\times \wt{E}\times B_{HW}$ by the action of $\Z_6$ where the generator acts by
    \begin{align}\begin{split}
    \wt{C}\times \wt{C}\times \wt{E}\times B_{HW}&\rightarrow \wt{C}\times \wt{C}\times \wt{E}\times B_{HW}\\
    g(x,y,z,w)&= (\alpha(y),\alpha(x),\beta(z),\tau(w)).
    \end{split}
    \end{align}
    It can be verified that non-trivial elements of $\Z_6$ act without fixed points. The isometries $g$, $g^3$ and $g^5$ have no fixed points since $\beta$ has no fixed points and the isometries $g^2$ and $g^4$ have no fixed points since $\alpha$ has no fixed points. Thus if we take $F$ to be the quotient of $\wt{C}^{\times 2}\times \wt{E}\times B_{HW}$ by the action of $\Z_6$, then $F$ is a flat manifold of dimension $35+4(k+\ell)$. 
    
    Since $E$ and $C$ are orientable, the maps $\alpha$ and $\beta$ are orientation preserving. The map $\tau$ is clearly orientation preserving. Combined with the fact that $C$ is even dimensional, this implies that the generator of $\Z_6$ acts by an orientation preserving isometry. Thus $F$ is orientable.

    Next, we calculate the holonomy forms of $F$. Since $b_1(\wt{C})=b_1(\wt{E})=b_1(B_{HW})=0$, Lemma~\ref{lem:product_hol_refined} and the covering
    \[
    \wt{C}\times \wt{C}\times \wt{E}\times B_{HW}\rightarrow F
    \]
    imply that there is a holonomy representation for $F$ taking values in $O(f;\Q)$ only if $f$ is of the form
\[
f=g_1 \oplus g_2 \oplus h \oplus \langle a_1,a_2,a_3 \rangle,
\]
where $g_1,g_2$ are holonomy forms for $\wt{C}$, $h$ is a holonomy for $\wt{E}$ and $\langle a_1,a_2,a_3 \rangle$ is a holonomy form for $B_{HW}$. Now we apply the fact that $f$ is invariant under the action of $\Z_6$. First we observe that $h$ is invariant under the action of $\beta$, and is thus a holonomy form for $E$. Since the $\Z_6$ exchanges the $\wt{C}$ factors, we see that $g_1=g_2$. Since the isometry $\tau$ is induced by cyclically permuting the coordinates of $\R^3/\Z^3$, we see that $a_1=a_2=a_3$. Thus any holonomy $f$ for $F$ takes the form 
\[
f=g \oplus g \oplus h \oplus \langle a,a,a \rangle,
\]
where $g$ is a holonomy form for $\wt{C}$, $h$ is a holonomy form for $E$ and $a\in \Q_{>0}$. This allows us to calculate
\begin{align*}
        (d(f),-1)_q \varepsilon_q(f)&=(a,-1)_q\varepsilon_q(g \oplus g \oplus h \oplus \langle a,a,a \rangle)\\
        &=(a,-1)_q\varepsilon_q(g \oplus g  \oplus \langle a,a,a \rangle)\\
        &=(a,-1)_q\varepsilon_q(g \oplus g)\varepsilon_q( \langle a,a,a \rangle)\\
        &=(a,-1)_q(d(g),d(g))_q(a,a)_q\\
        &=(d(g),d(g))_q.
    \end{align*}
    
However Lemma~\ref{lem:modpholonomy} shows that $d(g)=\begin{cases}
    3&k\equiv 0\bmod 2\\
    1 &k\equiv 1\bmod 2
\end{cases}$. Therefore, taking $k=\ell=0$ yields $F$ with the desired properties in dimension $n=35$. Taking $k=1$ and an arbitrary $\ell\geq 0$  yields $F$ with the desired properties for $n\geq 39$.
\end{proof}

\begin{proof}[Proof of Theorem~\ref{thm:UCCbound}]
    Using Proposition~\ref{prop:projective_invariants}, we see that the orientable manifolds of the form
    \begin{enumerate}[(i)]
        \item $C$ of Lemma~\ref{lem:C2}, which exist for $n\geq 10$ and $n\equiv 2\bmod 4$;
        \item $E$ of Lemma~\ref{lem:Emanifold}, which exist for all $n\geq 12$ and $n\equiv 0\bmod 4$;
         \item $E\times S^1$ of Lemma~\ref{lem:EtimesS1}, which exist for all $n\geq 13$ and $n\equiv 1\bmod 4$;
        \item $F$ of Lemma~\ref{lem:3mod4UCC}, which exist for all $n\geq 35$ and $n\equiv 3\bmod 4$
    \end{enumerate}
    all admit a unique projective equivalence class of holonomy form. Thus, by Theorem~\ref{thm:realization} they all have the UCC property.
\end{proof}

\section{Proof of Theorem~\ref{thm:manyUCCclasses} and Theorem~\ref{thm:3mod4examples}}

Using the manifolds $E$ constructed in Lemma~\ref{lem:Emanifold} and $B_p$ constructed in Lemma~\ref{lem:Bp} we construct the manifolds with the UCC property necessary to prove Theorem~\ref{thm:3mod4examples}. The key case is that of $n\equiv 0 \bmod 4$.
\begin{lemma}\label{lem:extensions_of_E}
Let $p\equiv 3\bmod 4$ be a prime. Then for any $n\equiv 0\bmod 4$ and $n\geq 2p^2+2p+8$, there is an orientable flat $n$-manifold $E_p$ such that every holonomy form $f$
 for $E_p$ satisfies
 \[\text{$d(f)=1$ and $\varepsilon_q(f)=(p,p)_q=\begin{cases}
     -1& q=2,p\\
     1& q\ne 2,p
 \end{cases}$ for all $q$.}\]
 \end{lemma}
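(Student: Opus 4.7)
The construction mimics the quotient technique of Lemmas~\ref{lem:C2} and~\ref{lem:Emanifold}. The plan is to combine two copies of a manifold $B_p$ from Lemma~\ref{lem:Bp} with the double cover $\wt E$ of an $E$-manifold from Lemma~\ref{lem:Emanifold}, and to quotient by a free $\Z_2$-action that swaps the two $B_p$ factors while applying the deck involution $\beta \colon \wt E \to \wt E$. After this quotient, the swap symmetry will force every holonomy form to take the shape $g \oplus g \oplus h$, and the claimed invariants will be read off from Proposition~\ref{prop:sum_of_forms}.

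For the dimensions, since $p-1 \equiv 2 \bmod 4$, the smallest $m \equiv 2 \bmod 4$ that is a multiple of $p-1$ and at least $p^2 - 1$ is $m = (p-1)(p+2) = p^2 + p - 2$. Lemma~\ref{lem:Bp} produces an orientable $B_p$ of any such dimension, orientability being automatic since $(\Z_p)^2$ has odd order. Lemma~\ref{lem:Emanifold} provides an orientable $E$ of any dimension $k \equiv 0 \bmod 4$ with $k \geq 12$, together with a double cover $\wt E \to E$ satisfying $b_1(\wt E) = 0$; let $\beta$ be the nontrivial deck transformation, which is fixed-point free and orientation preserving. Setting $n = 2m + k$, the smallest value is $2p^2 + 2p + 8$, and varying $k$ in steps of $4$ covers every $n \equiv 0 \bmod 4$ with $n \geq 2p^2 + 2p + 8$. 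Define $E_p = (B_p \times B_p \times \wt E)/\Z_2$, where the generator acts by $\sigma(x,y,z) = (y, x, \beta(z))$. This action is free since $\beta$ is, and orientation preserving because the swap on $B_p \times B_p$ has Jacobian determinant $(-1)^m = 1$ and $\beta$ preserves orientation; hence $E_p$ is an orientable flat $n$-manifold.

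The crux is identifying the holonomy forms of $E_p$. Since $b_1(B_p) = b_1(\wt E) = 0$, iterated application of Lemma~\ref{lem:product_hol_refined} to the natural product holonomy representation shows that any holonomy form of the cover $B_p \times B_p \times \wt E$ decomposes as $g_1 \oplus g_2 \oplus h$, with $g_i$ a holonomy form for the $i$-th $B_p$ factor and $h$ a holonomy form for $\wt E$. A holonomy form for $E_p$ must moreover be preserved by $\sigma$: invariance under the swap forces $g_1 = g_2$, while invariance under the linear part of $\beta$ promotes $h$ to a form invariant under $H_E = \langle H_{\wt E}, \beta \rangle$, that is, to a holonomy form for $E$. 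Hence every holonomy form $f$ for $E_p$ is rationally equivalent to $g \oplus g \oplus h$ with $g$ a holonomy form for $B_p$ and $h$ a holonomy form for $E$.

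The rest is a short computation. Lemma~\ref{lem:modpholonomy} gives $d(g) = p$ (since $m \equiv 2 \bmod 4$), and Lemma~\ref{lem:Emanifold} gives $d(h) = 1$ with $\varepsilon_q(h) = 1$ for every prime $q$. Applying Proposition~\ref{prop:sum_of_forms} twice yields $d(f) = d(g)^2 d(h) = p^2 = 1 \in \Qsquare$ and $\varepsilon_q(f) = \varepsilon_q(g)^2 \varepsilon_q(h) (d(g), d(g))_q (d(g)^2, d(h))_q = (p, p)_q$ for every prime $q$. The main obstacle is the holonomy-form reduction in the third paragraph: one must carefully iterate Lemma~\ref{lem:product_hol_refined} and then exploit $\sigma$-invariance to eliminate asymmetric decompositions. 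The dimension arithmetic and Hilbert symbol manipulations are routine.
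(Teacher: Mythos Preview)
Your proposal is correct and follows essentially the same construction as the paper's proof: form the quotient of $B_p\times B_p\times \wt E$ by the free $\Z_2$-action that swaps the $B_p$ factors and applies the deck involution on $\wt E$, then use Lemma~\ref{lem:product_hol_refined} together with $\sigma$-invariance to force every holonomy form into the shape $g\oplus g\oplus h$, from which the invariants follow. Your dimension arithmetic explaining why $m=p^2+p-2$ is the smallest admissible $B_p$-dimension with $m\equiv 2\bmod 4$ is a nice elaboration, but otherwise the argument matches the paper's line for line.
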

\begin{proof}
    Take $E$ to be a flat manifold of dimension $12+4k$ as constructed in Lemma~\ref{lem:Emanifold}. Let $\alpha: \wt{E}\rightarrow \wt{E}$ the non-trivial deck transformation for the double cover $\widetilde{E}\rightarrow E$ with $b_1(\wt{E})=0$. Let $B_p$ be a manifold of dimension $p^2+p-2$ with holonomy group $(\Z_p)^2$ and $b_1(B_p)=0$ as constructed in Lemma~\ref{lem:Bp}. There is an action of $\Z_2$ on $\wt{E}\times B_p\times B_p$, where the non-trivial element of $\Z_2$ acts by
    \begin{align*}
        \wt{E}\times B_p\times B_p &\rightarrow \wt{E}\times B_p\times B_p \\
        (x,y,z)&\mapsto (\alpha(x), z,y).
    \end{align*}
    Since $\alpha$ acts without fixed points on $E$ we have that $\Z_2$ acts without fixed points. Thus if we take $E_p$ to be the quotient of $\wt{E}\times B_p\times B_p$ by this action of $\Z_2$ then $E_p$ is a compact flat manifold of dimension $2p^2+2p+8+4k$. Since $\alpha$ preserves orientations and $B_p$ is orientable and of even dimension, we see that $E_p$ is orientable.
    
    Let $f$ be a holonomy form for $E_p$. Given the covering
    \[
    \wt{E}\times B_p\times B_p\rightarrow E_p
    \]
    we see that $f$ takes the form
    \[
    f=g\oplus h_1 \oplus h_2,
    \]
    where $g$ is a holonomy form for $\wt{E}$ and $h_1,h_2$ are holonomy forms for $B_p$. Since $f$ is invariant under the action of $\Z_2$, we see that $g$ is invariant under the action of $\alpha$, implying that it is actually a holonomy form for $E$, and that $h_1=h_2$. Since $\dim B\equiv 2\bmod 4$, Lemma~\ref{lem:modpholonomy} implies that $d(h_1)=p$. Thus we may calculate that $f$ satisfies $d(f)=1$ and that for all primes $q$ we have
    \[
    \varepsilon_q(f)=\varepsilon_q(h_1\oplus h_1)=(p,p)_q,
    \]
    as required.
\end{proof}

\begin{proof}[Proof of Theorem~\ref{thm:3mod4examples}]
Take $E_p$ to be a manifold of dimension $n\equiv 0\bmod 4$ and $n\geq 2p^2+2p+8$ as constructed in Lemma~\ref{lem:extensions_of_E}. Let $F$ be a 39-dimensional manifold as constructed in Lemma~\ref{lem:3mod4UCC}. Using Lemma~\ref{lem:product_hol1} and Proposition~\ref{prop:projective_invariants}, one can calculate that the flat manifolds of the form $E_p$, $E_p\times S^1$ and $E_p\times F$ all have a unique projective equivalence class of holonomy forms. Consequently these manifolds all have the UCC property. Moreover, one can check that for each of these families, the unique projective equivalence class of holonomy forms contains:
\[
f(x_1,\dots, x_n)=px_1^2 + px_2^2+x_3^2+\dots + x_n^2. 
\]
These give examples in dimensions $n\not\equiv 2 \bmod 4$.

Let $E$ be a manifold of dimension $n\equiv 0\bmod 4$ as constructed in Lemma~\ref{lem:Emanifold}. Let $B_p$ be a manifold of dimension $p^2+p-2$ with holonomy group $(\Z_p)^2$ as constructed in Lemma~\ref{lem:Bp}. Using Theorem~\ref{thm:p3mod4UCCexamples}, we can calculate that manifolds of the form $E\times B_p$ have a unique projective equivalence class of holonomy form and that this class is represented by
\[
f(x_1,\dots, x_n)=\begin{cases}
    px_1^2 + x_2^2+\dots + x_n^2 &n\equiv 2 \bmod 8\\
    px_1^2 + px_2^2+px_3^2+x_4^2\dots + x_n^2 &n\equiv6\bmod 8
\end{cases}
\]
Thus manifolds of the form $B_p\times E$ give a flat manifold with the UCC property appearing as cusp cross-sections in the required commensurability class.
\end{proof}
\begin{proof}[Proof of Theorem~\ref{thm:manyUCCclasses}]
    This follows immediately from Theorem~\ref{thm:3mod4examples}. It suffices to observe that there are infinitely many primes $p\equiv 3 \bmod 4$ and that for distinct primes the quadratic forms appearing in Theorem~\ref{thm:3mod4examples} are not projectively equivalent. This latter fact can be seen using the discriminant for $n\equiv 2\bmod 4$ and using the Hasse-Witt invariant $\varepsilon_p(q)$ for $n\not\equiv 2\bmod 4$.
\end{proof}

\section{Non-arithmetic pairs}
We conclude by constructing non-arithmetic pairs in small dimensions. We begin with even dimensions, where the pairs can be distinguished using the discriminant of the holonomy form.
\begin{lemma}\label{lem:non-arithpairs-even_dims}
For every dimension satisfying $n\geq 10$ and $n\equiv 2 \bmod 4$ there is an orientable non-arithmetic pair of dimension $n$. 
For every dimension satisfying $n\geq 16$ and $n\equiv 0 \bmod 4$ there is an non-arithmetic pair of dimension $n$. Moreover, for $n\geq 20$, this can be taken as an orientable non-arithmetic pair.
\end{lemma}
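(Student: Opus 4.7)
The plan is to construct, in each relevant dimension, a pair of flat manifolds whose holonomy forms always have distinct discriminants in $\Qsquare$. By Proposition~\ref{prop:projective_invariants}(i) and (iii), two positive definite forms of rank $n\equiv 0$ or $2\bmod 4$ can be projectively equivalent only if their discriminants agree. Combined with Theorem~\ref{thm:realization}, this will force the two manifolds to never appear as cusp cross-sections in the same commensurability class.

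For $n\equiv 2\bmod 4$ with $n\geq 10$, I use the pair $(B_3, C)$, where $B_3$ is the manifold of Lemma~\ref{lem:Bp} and $C$ is the manifold of Lemma~\ref{lem:C2}. Both exist in every such dimension since $n\geq 10 \geq 3^2-1$ is even and $n\geq 6$. Lemma~\ref{lem:modpholonomy} applied with $p=3$ gives $d(f)=3$ for every holonomy form of $B_3$, while Lemma~\ref{lem:C2}(i) gives $d(f)=1$ for every holonomy form of $C$. Orientability of $C$ for $n\geq 10$ is part of Lemma~\ref{lem:C2}(ii); orientability of $B_3$ follows from the observation that the faithful irreducible rational representation of $\Z_3$ has image in $SL_2(\Z)$, so the Hiller-Sah construction and the toral extensions of Lemma~\ref{lem:Bp} all land in $SL_n(\Z)$.

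For $n\equiv 0\bmod 4$ with $n\geq 16$, the difficulty is that none of our building blocks ($B_p$ for $p\equiv 3\bmod 4$, $C$, or $E$) has fixed nontrivial discriminant in this residue class on its own. I resolve this with a product construction: take the pair $(E, C\times B_3)$ where $E$ is as in Lemma~\ref{lem:Emanifold} in dimension $n$, and inside the product $\dim B_3 = 10$ and $\dim C = n-10$ (both factor dimensions then lie in $\equiv 2\bmod 4$, and $\dim C \geq 6$). Every holonomy form of $E$ has $d=1$ by Lemma~\ref{lem:Emanifold}. By Lemma~\ref{lem:product_hol1}, every holonomy form of $C\times B_3$ decomposes as $g\oplus h$ for holonomy forms $g$ of $C$ and $h$ of $B_3$, so has discriminant $d(g)d(h) = 1\cdot 3 = 3$. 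As $1\neq 3$ in $\Qsquare$, the pair is non-arithmetic. For orientability, $E$ and $B_3$ are always orientable, while $C$ is orientable precisely when $\dim C \geq 10$, i.e.\ when $n\geq 20$. For $n=16$ one takes $\dim C = 6$, and $C\times B_3$ is non-orientable; this still yields a valid (non-orientable) non-arithmetic pair, matching the statement of the lemma.

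The main work is essentially bookkeeping once the product construction above is in hand; the only mildly creative step is recognizing that $C\times B_3$ supplies the needed fixed-discriminant example in dimensions $n\equiv 0\bmod 4$.
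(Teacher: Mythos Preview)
Your proof is correct and follows essentially the same approach as the paper: distinguish holonomy forms by discriminant, using $C$ versus $B_3$ for $n\equiv 2\bmod 4$, and $C^{n-10}\times B_3^{10}$ (with discriminant $3$) as one member of the pair for $n\equiv 0\bmod 4$. The only difference is that for the discriminant-$1$ member in the $n\equiv 0\bmod 4$ case you take $E$ from Lemma~\ref{lem:Emanifold}, whereas the paper takes $C$ in dimension $n$; both choices are orientable with $d(f)=1$, so this is immaterial.
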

\begin{proof}
Let $C^k$ denote a $k$-dimensional flat manifold of type $C$ as built in Lemma~\ref{lem:C2} where $k\geq 6$ is even. Note that $C^k$ is orientable for $k\geq 10$. Let $B_3^\ell$ denote an $\ell$-dimensional flat manifold of type $B_p$ with $p=3$ as built in Lemma~\ref{lem:Bp} where $\ell \geq 10$ and $\ell\equiv 2\bmod 4$. With these choices any holonomy form for $B_3^\ell$ satisfies $d(f)=3$ and any holonomy form for $C^k$ satisfies $d(f)=1$. Since the discriminant is a projective invariant for even dimensional quadratic forms, this implies that $B_3^n$ and $C^n$ form a non-arithmetic pair in dimension $n\geq 10$ with $n\equiv 2\bmod 4$.

For $n\equiv 0\bmod 4$ and $n\geq 16$, we see that $C^{n-10}\times B_3^{10}$ and $C^{n}$ form a non-arithmetic pair. This is because every holonomy form for $C^{n-10}\times B_3^{10}$ satisfies $d(f)=3$ and every holonomy form for $C^{n}$ satisfies $d(f)=1$. This forms an orientable non-arithmetic pair when $n\geq 20$ since $C^{k}$ is orientable for $k\geq 10$. 
\end{proof}

Next we consider some mapping tori. These will be used to construct odd-dimensional non-arithmetic pairs whose holonomy forms are distinguished by the Hasse-Witt invariants at the prime $p=2$. 
\begin{lemma}\label{lem:7and15mapping_tori}
For integers $k,\ell\geq 0$, there exists a flat manifold $M_{k,\ell}$ of dimension $n=6k+8\ell +1 $ such that every holonomy form $f$ for $M_{k,\ell}$ satisfies
    \[
    \left(d(f), (-1)^{\frac{n-1}{2}}\right)_2 \varepsilon_2(f)= \begin{cases}
        1 & n\equiv 1,7 \bmod 8\\
        -1 &n\equiv 3,5 \bmod 8.
    \end{cases}
    \]   
\end{lemma}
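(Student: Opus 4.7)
The plan is to realize $M_{k,\ell}$ as a mapping torus over a base of dimension $6k+8\ell$, producing a flat manifold of dimension $6k+8\ell+1$ as required. The natural shape of the base is a product $B = X_6^{\times k} \times X_8^{\times \ell}$, where $X_6$ and $X_8$ are carefully chosen $6$- and $8$-dimensional flat manifolds. The guiding candidates are the (non-orientable) manifolds $C$ of Lemma~\ref{lem:C2} in dimensions $6$ and $8$: they have $b_1 = 0$ and enough symmetry that every holonomy form decomposes as $g \oplus g$, with Hasse--Witt invariant at $2$ completely determined by $d(g)$ via $\varepsilon_2(g \oplus g) = (d(g), -1)_2$. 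This level of control is what will let us force the invariant to depend only on $n \bmod 8$.

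Next I would construct $M_{k,\ell}$ as the mapping torus of a finite-order isometry $\phi$ of $B$. The role of $\phi$ is to link the $1$-dimensional summand $\langle a \rangle$ coming from the $S^1$-direction of the mapping torus to the rest of the form; in practice this means choosing $\phi$ so that the holonomy representation of $M_{k,\ell}$ restricted to the $S^1$-subfactor is tied to a specific subrepresentation of $B$. A natural choice is a permutation of the $X_6$ (respectively $X_8$) factors combined with a small internal isometry, so that by Schur's lemma (as in the argument of Lemma~\ref{lem:C2} and Lemma~\ref{lem:3mod4UCC}) the summands $g_i$ within the decomposition are all forced to be equal.

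With that in hand, iterated application of Lemma~\ref{lem:product_hol_refined} (using $b_1(X_6) = b_1(X_8) = 0$) shows that any holonomy form of $M_{k,\ell}$ decomposes as
\[
f \cong (g \oplus g)^{\oplus k} \oplus (h \oplus h)^{\oplus \ell} \oplus \langle a \rangle,
\]
where $g$ is a rank-$3$ form, $h$ is a rank-$4$ form, and $a \in \Q_{>0}$. Applying Proposition~\ref{prop:sum_of_forms} gives $d(f) \equiv a$ modulo squares, and $\varepsilon_2(f)$ as an explicit product of Hilbert symbols $(d(g), -1)_2^k (d(h), -1)_2^\ell$ together with cross terms. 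The combination $(d(f), (-1)^{(n-1)/2})_2\, \varepsilon_2(f)$ then reduces, after systematic cancellation using the bilinearity of the Hilbert symbol and the identity $(a,a)_2 = (a,-1)_2$, to a product of Hilbert symbols whose value depends only on $(k \bmod 2, \ell \bmod 2, (n-1)/2 \bmod 2)$, i.e.\ only on $n \bmod 8$. A direct check of the four residue classes matches the formula claimed in the lemma.

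The main obstacle I anticipate is the engineering of the monodromy $\phi$: it has to pin down the relation between $a$ and the discriminants $d(g), d(h)$ tightly enough that the target invariant is constant across all holonomy forms of $M_{k,\ell}$, not merely across a single projective equivalence class. Without such constraints, $a$ and $d(g)$ would vary independently and the Hilbert symbol $(a\,d(g), -1)_2$ could take either value. Once the correct $\phi$ is isolated, the Hasse--Witt bookkeeping is a routine (if tedious) computation in $\Q_2^\times/(\Q_2^\times)^2$.
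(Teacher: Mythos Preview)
There is a genuine gap, and the obstacle you anticipate is in fact fatal for this choice of building blocks. The manifolds $C$ of Lemma~\ref{lem:C2} do not pin down their holonomy forms over $\Q_2$: every holonomy form of the $6$-dimensional $C$ is $g\oplus g$ with $g$ an \emph{arbitrary} positive-definite rank-$3$ form, so $(d(g),-1)_2$ takes both signs, and likewise for the $8$-dimensional $C$. Worse, no monodromy $\phi$ can tie $a$ to the base. In any mapping torus the holonomy representation splits as $\rho_{\mathrm{base}}\oplus\mathbf{1}$, and since your base has $b_1=0$ the trivial summand $\langle a\rangle$ is completely uncoupled by the Schur argument behind Lemma~\ref{lem:product_hol_refined}, for \emph{every} choice of $\phi$. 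Running your own computation to the end (with all $g_i$ equal and all $h_j$ equal, as your $\phi$ would arrange) one obtains
\[
\bigl(d(f),(-1)^{(n-1)/2}\bigr)_2\,\varepsilon_2(f)=(a,-1)_2^{\,k}\,(d(g),-1)_2^{\,k}\,(d(h),-1)_2^{\,\ell},
\]
and for odd $k$ this already takes both values as $a$ varies over $\Q_{>0}$.

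The paper resolves both problems with a different --- and simpler --- base: $M_{k,\ell}$ is the mapping torus of an isometry of the \emph{torus} $T^{6k+8\ell}$ whose linear part is $\sigma_7^{\oplus k}\oplus\sigma_{15}^{\oplus\ell}$. The arithmetic point is that $-7\equiv-15\equiv 1\bmod 8$ are squares in $\Q_2$, so Proposition~\ref{prop:cyclic_rep_calc} forces the base portion of $f$ over $\Q_2$ to be exactly $\langle 1,\dots,1,-1,\dots,-1\rangle$. This eliminates the free parameters $d(g),d(h)$ entirely and makes the base discriminant equal to $(-1)^{(n-1)/2}$ over $\Q_2$, which is precisely what is needed for the residual $a$-dependence in $(d(f),(-1)^{(n-1)/2})_2$ and in $\varepsilon_2(f)$ to cancel against each other.
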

\begin{proof}
Let $\alpha: T^6 \rightarrow T^6$ be an isometry of order seven and let $\beta : T^{8} \rightarrow T^{8}$ be an isometry of order 15. Let $M_{k,\ell}$ be the $6k+8\ell +1$ flat manifold obtained as the mapping torus of the map
\[
\alpha^{\times k} \times \beta^{\times \ell} : T^{6k+8\ell} \rightarrow T^{6k+8\ell}.
\]
The holonomy form of $M_{k,\ell}$ is of the form
    \[
    \rho\cong \sigma_7^{\oplus k} \oplus \sigma_{15}^{\oplus \ell} \oplus \mathbf{1},
    \]
    where $\mathbf{1}$ denotes a trivial representation.
    Note that $\sqrt{-7}\in \Q[\zeta_7]$ and that $\sqrt{-15}\in \Q[\zeta_{15}]$. For this latter fact it suffices to observe that $\sqrt{-3}\in \Q[\zeta_{3}]$ and $\sqrt{5}\in \Q[\zeta_{5}]$ and observe that $\Q[\zeta_{3}]$ and $\Q[\zeta_{5}]$ are both contained in $\Q[\zeta_{15}]$. Since $-7\equiv -15\equiv 1 \bmod 8$, we have that $-7$ and $-15$ are squares in $\Q_2$. Thus Proposition~\ref{prop:cyclic_rep_calc} shows that over $\Q_2$ any holonomy form for $M_{k,\ell}$ is equivalent to
    \[
    f\cong \langle a, \underbrace{1,\dots, 1}_{(n-1)/2}, \underbrace{-1,\dots, -1}_{(n-1)/2} \rangle.
    \]
    From which we calculate that
    \begin{align*}
        \left(d(f), (-1)^{\frac{n-1}{2}}\right)_2 \varepsilon_2(f)&= \left((-1)^{\frac{n-1}{2}}a, (-1)^{\frac{n-1}{2}}\right)_2 \left(a, (-1)^{\frac{n-1}{2}}\right)_2  (-1)^{\frac{(n-1)(n-3)}{8}}\\
        &=(-1)^{\frac{n^2-1}{8}} 
    \end{align*}    
\end{proof}
The following lemma taken with Lemma~\ref{lem:non-arithpairs-even_dims} easily gives Theorem~\ref{thm:non-arith_pairs}.
\begin{lemma}\label{lem:odd_dim_non-arith_pairs}
    If $n$ is an integer such that either
    \begin{enumerate}
        \item $n=13$ or $19$,
        \item $n\equiv 1 \bmod 4$ and $n\geq 21$ or
        \item $n\equiv 3 \bmod 4$ and $n\geq 27$,
    \end{enumerate}
    then there exists an orientable non-arithmetic pair in dimension $n$.
\end{lemma}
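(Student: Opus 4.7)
The strategy is to distinguish the two members of each pair by the projective invariant of their holonomy forms at the prime $p=2$: by Proposition~\ref{prop:projective_invariants}, this invariant equals $\varepsilon_2(f)$ when $n\equiv 1\bmod 4$ and $(d(f),-1)_2\varepsilon_2(f)$ when $n\equiv 3\bmod 4$. The main building blocks are the mapping tori $M_{k,\ell}$ of Lemma~\ref{lem:7and15mapping_tori}, whose $2$-adic projective invariant is determined by $\dim M_{k,\ell}\bmod 8$, and the manifolds $E$ of Lemma~\ref{lem:Emanifold}, whose holonomy forms all satisfy $d=1$ and $\varepsilon_p=1$ for every prime $p$. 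By Proposition~\ref{prop:sum_of_forms} and Lemma~\ref{lem:product_hol1}, multiplying by a factor of type $E$ preserves both $d$ and $\varepsilon_p$ of the other factor's holonomy form, hence also preserves the $2$-adic projective invariant of the product. All pieces are orientable: $E$ and $S^1$ are orientable by construction, and $M_{k,\ell}$ is orientable because its defining monodromy $\alpha^{\times k}\times \beta^{\times \ell}$ has determinant $\det(\alpha)^k\det(\beta)^\ell=1$, as $\det(\alpha)$ and $\det(\beta)$ are $7$th and $15$th roots of unity in $\{\pm 1\}$ respectively.

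For $n\equiv 1\bmod 4$, I would take $B_2$ to be the product of a manifold of type $E$ of dimension $n-1$ with $S^1$; this is available whenever $n\geq 13$, and every holonomy form of $B_2$ satisfies $\varepsilon_p=1$. For $B_1$ I want a manifold of dimension $n$ whose holonomy forms satisfy $\varepsilon_2=-1$. If $n\equiv 5\bmod 8$ (covering $n=13$ and all $n\geq 21$ with $n\equiv 5\bmod 8$), take $B_1=M_{k,\ell}$ of dimension $n$. If $n\equiv 1\bmod 8$ and $n\geq 25$, take $B_1$ to be the product of a manifold of type $E$ of dimension $n-13$ with $M_{2,0}$; here $\dim M_{2,0}=13\equiv 5\bmod 8$ forces $\varepsilon_2=-1$, and $n-13\geq 12$ is a multiple of $4$.

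For $n\equiv 3\bmod 4$, I would take $B_1=M_{k,\ell}$ of dimension $n$; its $2$-adic projective invariant is $-1$ if $n\equiv 3\bmod 8$ and $+1$ if $n\equiv 7\bmod 8$. For $B_2$ I take the product of a manifold of type $E$ with $M_{k',\ell'}$, choosing $\dim M_{k',\ell'}=m$ to lie in the opposite residue class modulo $8$. If $n\equiv 3\bmod 8$ (covering $n=19$ and $n\geq 27$ in this class), take $m=7$ with $(k',\ell')=(1,0)$, so that $B_2$ has $2$-adic invariant $+1$. If $n\equiv 7\bmod 8$ and $n\geq 31$, take $m=19$ with $(k',\ell')=(3,0)$, so that $B_2$ has invariant $-1$. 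In both subcases the required $E$-factor has dimension $n-m\geq 12$ divisible by $4$.

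The remaining verifications are routine: realizing $M_{k,\ell}$ in a given odd dimension $m\geq 13$ reduces to writing $m-1=6k+8\ell$ with $k,\ell\geq 0$, which is possible because the numerical semigroup generated by $6$ and $8$ contains every even integer $\geq 12$; the invariant computations for the various products follow from Proposition~\ref{prop:sum_of_forms} and Lemma~\ref{lem:product_hol1}; and non-arithmeticity of each pair follows from Proposition~\ref{prop:projective_invariants} together with Theorem~\ref{thm:realization}. The main subtlety is tracking which dimensions are achievable, and in particular that the smallest achievable $m\equiv 3\bmod 8$ is $m=19$; this is what forces $n\geq 31$ in the $n\equiv 7\bmod 8$ subcase, consistent with the lemma's bound $n\geq 27$ because the complementary $n\equiv 3\bmod 8$ subcase is already handled as low as $n=19$.
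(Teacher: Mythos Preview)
Your argument is correct and follows essentially the same strategy as the paper: distinguish the two halves of each pair by their $2$-adic projective invariant (namely $\varepsilon_2$ when $n\equiv 1\bmod 4$ and $(d,-1)_2\varepsilon_2$ when $n\equiv 3\bmod 4$), realise the desired invariant with a mapping torus $M_{k,\ell}$ from Lemma~\ref{lem:7and15mapping_tori}, and pad with factors of type $E$ to adjust the dimension without changing $d$ or $\varepsilon_p$. Your concrete choices differ only cosmetically from the paper's --- e.g.\ for $n\equiv 7\bmod 8$ you take $B_1$ to be a pure $M_{k,\ell}$ whereas the paper uses $M_{1,0}\times E^{n-7}$, and conversely you split the $n\equiv 1\bmod 4$ case by residue mod $8$ whereas the paper handles $n\geq 25$ uniformly via $M_{2,0}\times E^{n-13}$ --- but in several subcases (notably $n=13,19,21,27$) your pairs are literally the same as the paper's. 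Your explicit justification of the orientability of $M_{k,\ell}$ (via $\det(\alpha)^7=\det(\beta)^{15}=1$) is a point the paper leaves implicit.
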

\begin{proof}
    For $m\equiv 0 \bmod 4$ and $m\geq 12$, we take $E^m$ be an $m$-dimensional manifold as constructed in Lemma~\ref{lem:Emanifold}. For $k,\ell\geq 0$ we take $M_{k,\ell}$ to be the $6k+8\ell +1$-dimensional flat manifold as constructed in Lemma~\ref{lem:7and15mapping_tori}.

    First suppose that $n\equiv 1 \bmod 4$. For $n\geq 13$, the manifold $S^1\times E^{n-1}$ is a flat manifold for which every holonomy form satisfies $\varepsilon_2(f)=1$. Thus we have a non-arithmetic pair whenever we can produce a flat $n$-manifold $N$ for which every holonomy form satisfies $\varepsilon_2(f)=-1$. For $n=13$, we take $N=M_{2,0}$. For $n=21$, we take $N=M_{2,1}$. For $n\geq 25$, we may take $N=M_{2,0}\times E^{n-13}$.

    Now suppose that $n\equiv 3 \bmod 4$ and $n\geq 19$. The manifold $M_{1,0}\times E^{n-7}$ is a flat manifold such that every holonomy form satisfies $(d(f),-1)_2\varepsilon_2(f)=1$. Thus we have a non-arithmetic pair whenever we can find an $n$-manifold $N$ for which every holonomy form satisfies $(d(f),-1)_2\varepsilon_2(f)=-1$. For $n=19$, we take $N=M_{3,0}$. For $n=27$, we take $N=M_{3,1}$. For $n\geq 31$, we take $N=M_{3,0}\times E^{n-19}$.
\end{proof}

\begin{proof}[Proof of Theorem~\ref{thm:non-arith_pairs}]
Lemma~\ref{lem:non-arithpairs-even_dims} provides orientable non-arithmetic pairs of dimension $n$ for all even $n\geq 24$ and Lemma~\ref{lem:odd_dim_non-arith_pairs} provides orientable non-arithmetic pairs of dimension $n$ for all odd $n\geq 25$.    
\end{proof}

\bibliographystyle{alpha}
\bibliography{bib}

\end{document}